\documentclass[]{amsart}
\setcounter{tocdepth}{1}
\usepackage{amsthm}
\usepackage{amstext}
\usepackage{amssymb}
\usepackage{xargs}[2008/03/08]
\usepackage[all]{xy}

\makeatletter
\numberwithin{equation}{section}
\numberwithin{figure}{section}
\theoremstyle{plain}
\newtheorem{thm}{\protect\theoremname}[section]
  \theoremstyle{plain}
  \newtheorem{conjecture}[thm]{\protect\conjecturename}
  \theoremstyle{definition}
  \newtheorem{defn}[thm]{\protect\definitionname}
  \theoremstyle{plain}
  \newtheorem{lem}[thm]{\protect\lemmaname}
  \theoremstyle{remark}
  \newtheorem{rem}[thm]{\protect\remarkname}
  \theoremstyle{plain}
  \newtheorem{prop}[thm]{\protect\propositionname}
  \theoremstyle{plain}
  \newtheorem{cor}[thm]{\protect\corollaryname}
  \theoremstyle{definition}
  \newtheorem{example}[thm]{\protect\examplename}
  \theoremstyle{definition}
  \newtheorem{problem}[thm]{\protect\problemname}

\makeatother

  \providecommand{\conjecturename}{Conjecture}
  \providecommand{\corollaryname}{Corollary}
  \providecommand{\definitionname}{Definition}
  \providecommand{\examplename}{Example}
  \providecommand{\lemmaname}{Lemma}
  \providecommand{\problemname}{Problem}
  \providecommand{\propositionname}{Proposition}
  \providecommand{\remarkname}{Remark}
\providecommand{\theoremname}{Theorem}

\begin{document}

\title{Toward motivic integration over wild Deligne-Mumford stacks}

\author{Takehiko Yasuda}

\address{Department of Mathematics, Graduate School of Science, Osaka University,
Toyonaka, Osaka 560-0043, Japan}

\email{takehikoyasuda@math.sci.osaka-u.ac.jp}

\dedicatory{To Professor Yujiro Kawamata}

\thanks{This work was supported by Grants-in-Aid for Scientific Research
(22740020).}
\begin{abstract}
We discuss how the motivic integration will be generalized to wild
Deligne-Mumford stacks, that is, stabilizers may have order divisible
by the characteristic of the base or residue field. We pose several
conjectures on this topic. We also present some possible applications
concerning stringy invariants, resolution of singularities, and weighted
counts of extensions of local fields.
\end{abstract}

\maketitle
\global\long\def\AA{\mathbb{A}}
\global\long\def\PP{\mathbb{P}}
\global\long\def\NN{\mathbb{N}}
\global\long\def\GG{\mathbb{G}}
\global\long\def\ZZ{\mathbb{Z}}
\global\long\def\QQ{\mathbb{Q}}
\global\long\def\CC{\mathbb{C}}
\global\long\def\FF{\mathbb{F}}
\global\long\def\LL{\mathbb{L}}
\global\long\def\RR{\mathbb{R}}

\global\long\def\bx{\mathbf{x}}
\global\long\def\bf{\mathbf{f}}

\global\long\def\cN{\mathcal{N}}
\global\long\def\cW{\mathcal{W}}
\global\long\def\cY{\mathcal{Y}}
\global\long\def\cM{\mathcal{M}}
\global\long\def\cF{\mathcal{F}}
\global\long\def\cX{\mathcal{X}}
\global\long\def\cE{\mathcal{E}}
\global\long\def\cJ{\mathcal{J}}
\global\long\def\cO{\mathcal{O}}
\global\long\def\cD{\mathcal{D}}

\global\long\def\fs{\mathfrak{s}}
\global\long\def\fp{\mathfrak{p}}
\global\long\def\fm{\mathfrak{m}}

\global\long\def\Spec{\mathrm{Spec}\,}
\global\long\def\AS{\mathrm{AS}}
\global\long\def\mAS{\mathrm{mAS}}
\global\long\def\RP{\mathrm{RP}}
\global\long\def\rj{\mathrm{rj}}
\global\long\def\bRP{\mathbf{RP}}
\global\long\def\ord{\mathrm{ord}\,}
\global\long\def\ordfnc{\mathrm{ord}}
\global\long\def\GCov#1{G\text{-}\mathrm{Cov}(#1)}
\global\long\def\HCov#1{H\text{-}\mathrm{Cov}(#1)}
\global\long\def\GCovrep#1{G\text{-}\mathrm{Cov}^{\mathrm{rep}}(#1)}
\global\long\def\ArbCov#1#2{#1\text{-}\mathrm{Cov}(#2)}
\global\long\def\GCovPt#1{G\text{-}\mathrm{Cov}^{\mathrm{pt}}(#1)}
\global\long\def\bGCovrep#1{\mathrm{G}\text{-}\mathbf{Cov}^{\mathrm{rep}}(#1)}
\global\long\def\Cov#1{\mathrm{Cov}(#1)}
\global\long\def\geoeq{\sim_{\mathrm{geo}}}
\newcommandx\Hom[4][usedefault, addprefix=\global, 1=, 2=]{\mathrm{Hom}_{#1}^{#2}(#3,#4)}

\global\long\def\Var{\mathrm{Var}}
\global\long\def\Gal{\mathrm{Gal}}
\global\long\def\bRep{\mathrm{Rep}}
\global\long\def\Jac{\mathrm{Jac}}
\global\long\def\Gor{\mathrm{Gor}}
\global\long\def\Ker{\mathrm{Ker}}
\global\long\def\Im{\mathrm{Im}}
\global\long\def\Aut{\mathrm{Aut}}
\global\long\def\sht{\mathrm{sht}}
\global\long\def\cZ{\mathcal{Z}}
\global\long\def\st{\mathrm{st}}
\global\long\def\diag{\mathrm{diag}}
\global\long\def\characteristic#1{\mathrm{char}(#1)}
\global\long\def\tors{\mathrm{tors}}
\global\long\def\sing{\mathrm{sing}}
\global\long\def\red{\mathrm{red}}
\global\long\def\pt{\mathrm{pt}}
 \global\long\def\univ{\mathrm{univ}}
\global\long\def\length{\mathrm{length}\,}
\global\long\def\sm{\mathrm{sm}}
\global\long\def\top{\mathrm{top}}
\global\long\def\rank{\mathrm{rank\,}}
\global\long\def\Mot{\mathrm{Mot}}
\global\long\def\age{\mathrm{age}\,}
\global\long\def\Conj#1{\mathrm{Conj}(#1)}
\global\long\def\Mass#1{\mathrm{Mass}(#1)}
\global\long\def\Inn#1{\mathrm{Inn}(#1)}
\global\long\def\bConj#1{\mathbf{Conj}(#1)}
\global\long\def\Tw#1{\mathrm{Tw}(#1)}

\tableofcontents

\section{Introduction}

The aim of this paper is to present an attempt to generalize the motivic
integration to wild Deligne-Mumford stacks. In relation to the McKay
correspondence, Denef and Loeser \cite{MR1905024} developed the motivic
integration applicable to the quotient map $\AA_{k}^{d}\to\AA_{k}^{d}/G$
for a finite group $G\subset GL_{d}(k)$ with $k$ a field of characteristic
zero. Motivated by this, the author \cite{MR2271984,MR2027195} developed
the motivic integration over Deligne-Mumford stacks under some tameness
condition. In this paper, we will make conjectures on how the theory
will be generalized by dropping the tameness condition. The easiest
wild case where $G$ has order equal to the characteristic has been
already studied in \cite{MR3230848}.

Let $A$ be a complete discrete valuation ring with algebraically
closed residue field and $D:=\Spec A.$ For a Deligne-Mumford stack
$\cX$ of finite type over $D,$ we will define a \emph{twisted arc}
of $\cX$ as a representable $D$-morphism $\cE\to\cX$, where $\cE$
is the quotient stack associated to some Galois cover of $D.$ We
expect that we can develop the motivic integration on the space of
twisted arcs of $\cX$, which we will denote by $\cJ_{\infty}\cX.$
Our main conjecture is the following: 
\begin{conjecture}[Conjecture \ref{conj--main}]
For a proper birational morphism $f:\cY\to\cX$ of pure-dimensional
Deligne-Mumford stacks of finite type over $D,$ we have a natural
map $f_{\infty}:\cJ_{\infty}\cY\to\cJ_{\infty}\cX.$ Then, for a measurable
function $F$ on a subset $C\subset\cJ_{\infty}\cX$, we have the
change of variables formula
\[
\int_{C}\LL^{F+w_{\cX}}d\mu_{\cX}=\int_{f_{\infty}^{-1}(C)}\LL^{F\circ f_{\infty}-\ord\Jac_{f}+w_{\cY}}d\mu_{\cY}.
\]
Here $\ord\Jac_{f}$ is the Jacobian order function of $f$ and $w_{\cX}$
and $w_{\cY}$ are canonically defined weight functions on $\cJ_{\infty}\cX$
and $\cJ_{\infty}\cY$ respectively. 
\end{conjecture}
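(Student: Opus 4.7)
The plan is to adapt the strategy of the classical change of variables formula (Kontsevich; Denef--Loeser \cite{MR1905024}; Yasuda \cite{MR2271984,MR2027195}) to the setting of twisted arcs of wild Deligne--Mumford stacks, using the wild cyclic-of-order-$p$ case \cite{Yasuda:2012fk} as a template. First I would construct the map $f_{\infty}:\cJ_{\infty}\cY\to\cJ_{\infty}\cX$ by sending a twisted arc $(\cE\to\cY)$ to the composition $(\cE\to\cY\to\cX)$; since both $\cE\to\cY$ and $f$ are representable, the composite is representable, so the assignment is well defined. I would then verify that $f_{\infty}$ respects the cylinder structure stratum by stratum, where the strata are indexed by the isomorphism class of the Galois cover $\cE\to D$, so that $F\circ f_{\infty}$ is measurable whenever $F$ is.

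The core is a local fiber analysis. Working \'etale-locally on $\cY$ around a point mapping to a fixed point of $\cX$, one reduces to the case of a morphism between formal neighborhoods of smooth points of smooth Deligne--Mumford stacks with finite stabilizers. On the stratum indexed by a finite group $G$ and a $G$-cover $\cE\to D$, the space of twisted arcs with domain $\cE$ is the quotient by $\Aut(\cE/D)$ of the space of $G$-equivariant morphisms from the total space of $\cE$ to a chart for $\cY$. This reduces the problem to a $G$-equivariant change of variables over $\Spec B$, where $B$ is the DVR corresponding to the cover $\cE\to D$.

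Next comes the measure comparison. Using the valuative criterion for properness of $f$ together with representability of $\cE\to\cX$, every twisted arc of $\cX$ lifts (possibly after enlarging $\cE$) to a twisted arc of $\cY$, so $f_{\infty}$ is essentially surjective onto its set-theoretic image. On cylinders of bounded level and bounded $\ord\Jac_{f}$, one shows that $f_{\infty}$ is a piecewise trivial fibration whose fibers have motivic class $\LL^{-\ord\Jac_{f}}$ up to the correction $\LL^{w_{\cY}-w_{\cX}}$ arising from the weight functions. Integrating over strata and applying a motivic Fubini argument would yield the stated formula.

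The hard part will be the definition and compatibility of the weight functions $w_{\cX},w_{\cY}$. In the tame case, $w_{\cX}$ is the age of the linear $G$-representation on the tangent space at the image point; its additivity under exact sequences of representations, combined with the identification of $\ord\Jac_{f}$ as the age of the normal representation, yields the formula almost immediately. In the wild case, $G$-representations need not decompose into characters, the classical age is not a priori defined, and the correct invariant is expected to involve the different, the Swan conductor, or an Artin--Schreier--Witt style invariant attached jointly to the cover $\cE\to D$ and to the tangent representation. One must find an intrinsic definition of $w_{\cX}$ that (i) specializes to the usual age in the tame case, (ii) satisfies the additivity needed to enforce $\ord\Jac_{f}=w_{\cY}-w_{\cX}$ under pullback, and (iii) is sufficiently constructible to make the motivic integrals convergent and the strata measurable. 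Once such a $w_{\cX}$ is in hand, the remaining steps should reduce to formal manipulations parallel to those of \cite{MR2271984,Yasuda:2012fk}.
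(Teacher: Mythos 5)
Your construction of $f_{\infty}$ has a genuine gap. You argue that $\cE\to\cY\to\cX$ is representable "since both $\cE\to\cY$ and $f$ are representable," but $f$ need not be representable: a proper birational morphism of Deligne--Mumford stacks can collapse stabilizers, and the paper's central example is precisely the coarse-moduli map $\phi:[M/G]\to M/G$, whose composite with a twisted arc $[E/G]\to[M/G]$ ($E$ connected) is never representable when $G\ne 1$. The paper handles this by \emph{factoring} $f\circ\gamma$: with $N:=\Ker(G\to\Aut(y)\to\Aut(x))$, one passes to $\cE'=[(E/N)/(G/N)]$ and takes the induced representable morphism $\cE'\to\cX$ as $f_{\infty}(\gamma)$. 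Without this step the map $f_{\infty}$ is simply undefined in the motivating case, and the Almost Bijectivity Lemma (which compares $\cJ_{\infty}\cY$ and $\cJ_{\infty}\cX$ away from bad loci) cannot even be stated.

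Beyond that, your route diverges from the paper's. You propose a "$G$-equivariant change of variables over $\Spec B$" together with a speculative weight invariant built from the different or Swan conductor. The paper instead makes the whole justification hinge on \emph{untwisting}: for a linear $G$-action on $V$ over $D$ and a $G$-cover $E=\Spec B$, one takes an $A$-basis of the module $\Xi_{B/A}^{V}$ of $G$-equivariant maps $E\to V$, packages it into a matrix $Q\in M_d(B)$, and defines a $B$-linear map $u:V_E\to V_E$ via $t^{l}Q^{-1}$. This yields an explicit bijection $u_\infty:J_\infty^{G,E}V\to J_\infty^{\ge l}V$ that \emph{converts twisted arcs into ordinary arcs}, after which the already-established change of variables for varieties (Denef--Loeser, Sebag) is invoked for the rational map $\psi'=\psi\circ u^{-1}$; the weight function appears concretely as $w(E)=d-\frac{1}{\sharp G}\mathrm{length}\,\frac{B}{(\det Q)}-\dim V_0^{G'}$, i.e.\ it is \emph{defined} from $\det Q$ rather than postulated as a conductor-type invariant, and whether it reduces to numerical invariants of $E$ and $V$ is left as an open problem. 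Your proposal captures the right shape at the level of fibers of $f_{\infty}$ (motivic class $\LL^{-\ord\Jac_f}$ corrected by $\LL^{w_\cY-w_\cX}$), but misses both the representability fix and the untwisting mechanism that drives the paper's argument.
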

In this paper, we will try to justify the conjecture. To do so, our
main tool is what we call \emph{untwisting, }a technique reducing
twisted arcs to non-twisted ones. We will study the formula in more
detail when $\cY$ is the quotient stack $[\AA_{D}^{d}/G]$ and $\cX$
is the quotient variety $\AA_{D}^{d}/G$ for some linear action $G\curvearrowright\AA_{D}^{d}$
of a finite group $G.$ In this case, we will make a more explicit
and conjectural expression of the weight function.

As an application, we will reach another conjecture which ties stringy
invariants of quotient singularities with mass formulae for extensions
of local fields. For a local field $K$ with residue field having
$q$ elements, Serre \cite{MR500361} proved a mass formula
\[
\sum_{L}\frac{1}{\sharp\Aut(L/K)}\cdot q^{-d(L)}=q^{1-n},
\]
where $L$ runs over isomorphism classes of totally ramified field
extensions $L/K$ of degree $n$ and $d(L)$ the discriminant exponent
of $L/K.$ Bhargava \cite{MR2354798} proved an analogous mass formula
for all \'{e}tale extensions of fixed degree. Also we may consider the motivic version
of such counts, replacing $q$ with $\LL$ and summation with motivic
integration over the space of extensions. From our conjectural change
of variables formula, we will derive an equality between a stringy
invariant of the associated quotient singularity and some motivic
count of extensions of the local field, the fraction field of $A.$
We may regard this as a version of the McKay correspondence. Kedlaya
\cite{MR2354797} and then Wood \cite{MR2411405} worked on this kind
of counting problems in terms of local Galois representations. Their
works might have a more direct relation with our motivic count.\footnote{Afterwards such a relation
was studied in \cite{Wood-Yasuda-I,Wood-Yasuda-II}.}

Another possible application concerns resolution of singularities.
Stringy invariants have a lot of information on resolution of singularities.
Our change of variables formula would be helpful in reducing stringy
invariants for varieties with quotient singularities to those for
smooth Deligne-Mumford stacks, which might be easier to compute. An
existence of resolution impose some constraints on stringy invariants.
Thus, if we find a singular variety with stringy invariant violating
one of them, then we can prove the non-existence of resolution of
singularities.\footnote{Afterwards this viewpoint was adopted in \cite{Wood-Yasuda-II}.} Similar arguments can apply to the problem on the non-existence
of crepant resolution. 

We will also discuss the problem when a family of quotient singularities
has a constant stringy invariant, especially in the case where the
family contains both tame and wild ones. 

There have been considerable developments  \cite{Wood-Yasuda-I,Wood-Yasuda-II,wild-p-adic,Yasuda:2014fk2} on the subject  after
the acceptance of the paper for publication until the final proof. 
I  add several footnotes to mention them. All footnotes in the paper are 
added at the final proof.

\subsection{Acknowledgements}

The author wish to thank Tomoyoshi Ibukiyama and Seidai Yasuda for
letting me know the relevance of Serre's result \cite{MR500361} in
this work. The author also like to thank Shuji Saito for stimulating
discussion on quotient singularities, Kiran S. Kedlaya, Julien Sebag
and Melanie Matchett Wood for reading a draft of the paper and giving
me useful comments, and Fabio Tonini for kindly explaining his result
on the moduli of ramified covers.

\subsection{Conventions}

We fix an algebraically closed field $k,$ a complete discrete valuation
ring $A$ with residue field $k.$ We denote the fraction field of
$A$ by $K$. We put $D:=\Spec A$ and $D^{*}:=\Spec K$: we call
them a \emph{formal disk }and \emph{punctured formal disk }respectively.\emph{
}Always $G$ will denote a finite group. 

As many of our statements will be conjectural, we will often make
heuristic arguments rather than rigorous ones. For this reason, we
often identify a $k$-scheme with its $k$-point set.

\section{The ring of values}

Motivic measures and integrals usually take values in some extension
of the Grothendieck ring of varieties. There are several versions
of such extensions which are slightly different one another. Throughout
the paper, we fix an appropriate one and denote it by $\hat{\cM}=\hat{\cM}_{k}.$
The following are some of its properties which will be necessary in
subsequent sections: 
\begin{enumerate}
\item \label{enu:gen1}$\hat{\cM}$ has an element associated to each scheme
$X$ of finite type over $k$, denoted $[X].$ We write $\LL:=[\AA_{k}^{1}].$ 
\item \label{enu:gen2}$\hat{\cM}$ contains the fractional powers $\LL^{a},$
$a\in\QQ$ of $\LL.$ 
\item \label{enu:rel1}For a bijective morphism $Y\to X$, $[Y]=[X].$ In
particular, $[X]=[X_{\red}]$. Also, if $Y\subset X$ is a closed
subset, then $[X]=[Y]+[X\setminus Y].$ This enables us to determine
$[C]\in\hat{\cM}$ for a constructible subset $C$ of a variety. 
\item \label{enu:rel2}Let $f:Y\to X$ be a morphism such that every fiber
$f^{-1}(x)$ admits a bijective morphism from or to the quotient variety
$\AA_{k}^{n}/H$ for some fixed $n$ and for some linear action $H\curvearrowright\AA_{k}^{n}$
of a finite group $H.$ (We will call $f$ an\emph{ $\LL^{n}$-fibration.})
Then $[Y]=[X]\LL^{n}.$
\item \label{enu:comp}For $r\in\ZZ_{>0}$ and a countable index set $I,$
an infinite sum of the form 
\[
\sum_{i\in I}[X_{i}]\LL^{a_{i}},\,a_{i}\in\frac{1}{r}\ZZ
\]
converges if and only if for every $a\in\RR,$ there exists at most
finitely many $i$ such that $\dim X_{i}+a_{i}\ge a.$
\item \label{enu:realization}There exists a ring homomorphism 
\[
P:\hat{\cM}\to\bigcup_{r=1}^{\infty}\ZZ((T^{-1/r}))
\]
called the \emph{virtual Poincar\'{e} realization}. For a variety
$X,$ $P([X])$ equals the virtual Poincar\'{e} polynomial $P(X)$
of $X$. If $X$ is smooth and proper, then $P(X)=\sum_{i}(-1)^{i}b_{i}(X)T^{i},$
where $b_{i}(X)$ is the $i$-th Betti number for $l$-adic cohomology.
Also we have $P(\LL)=T^{2}.$
\end{enumerate}
Concerning the construction of $\hat{\cM}$, Properties (\ref{enu:gen1})
and (\ref{enu:gen2}) determine generators, (\ref{enu:rel1}) and
(\ref{enu:rel2}) relations and (\ref{enu:comp}) how to complete
the ring. Property (\ref{enu:realization}) is rather a consequence
of the construction. For instance, see \cite{MR3230848} for details.

\section{Motivic integration over varieties}

In this section, we will briefly recall the motivic integration invented
by Kontsevich, developed by Denef and Loeser \cite{MR1664700,MR1905024}
in characteristic zero, and Sebag \cite{MR2075915} (see also \cite{0912.4887})
in positive or mixed characteristic.

\subsection{Varieties over $k$}

Let $X$ be a variety of pure dimension $d$ over $k$. For a non-negative
integer $n,$ the $n$-\emph{jet scheme} of $X$, denoted $J_{n}X$,
is the fine moduli scheme parameterizing $n$-jets: 
\[
J_{n}X=\{\Spec k[t]/(t^{n+1})\to X\}.
\]
They are schemes of finite type over $k$. For $n\ge m,$ we have
a natural morphism $J_{n}X\to J_{m}X$, called the \emph{truncation}.
The \emph{arc space} of $X$, denoted $J_{\infty}X,$ is the projective
limit of $J_{n}X,$ $n\in\ZZ_{\ge0}$, which parameterizes \emph{arcs}:
\[
J_{\infty}X=\{\Spec k[[t]]\to X\}.
\]
 The natural maps $\pi_{n}:J_{\infty}X\to J_{n}X$ are also called
\emph{truncations}. 

The arc space has a measure taking values in $\hat{\cM},$ which is
called the \emph{motivic measure}. We denote it by $\mu_{X}$ and
define it as follows: a subset $C\subset J_{\infty}X$ is said to
be \emph{stable} if for some $n,$ 
\begin{enumerate}
\item $\pi_{n}(C)$ is a constructible subset,
\item $C=\pi_{n}^{-1}(\pi_{n}(C)),$ and
\item for every $n'\ge n,$ $\pi_{n'+1}(C)\to\pi_{n}(C)$ is an $\LL^{d}$-fibration. 
\end{enumerate}
For a stable subset $C,$ we put 
\[
\mu_{X}(C):=[\pi_{n}(C)]\LL^{-nd}\in\hat{\cM},\,(n\gg0).
\]
This defines the motivic measure on stable subsets. We can extend
this to a larger class of subsets called \emph{measurable subsets}.
Roughly, a measurable subset is a subset of $J_{\infty}X$ which can
be approximated by a series of stable subsets. Let 
\[
F:J_{\infty}X\supset C\to\frac{1}{r}\ZZ\cup\{\infty\}
\]
be a measurable function on a subset $C$ for some $r\in\ZZ_{>0}$,
that is, every fiber of it is a measurable subset and $F^{-1}(\infty)$
has measure zero. Then we define the \emph{motivic integral} of $\LL^{F}$
by
\[
\int_{C}\LL^{F}d\mu_{X}:=\sum_{n\in\frac{1}{r}\ZZ}\mu_{X}(F^{-1}(n))\LL^{n}\in\hat{\cM}\cup\{\infty\}.
\]
Here, if the infinite sum diverges, then we put the integral to be
$\infty$.

\subsection{Varieties over $D$}

All these about the motivic integration over $k$-varieties can be
generalized to $D$-varieties. Let $X$ be a $D$-variety, that is,
an integral scheme of finite type over $D.$ For $n\in\ZZ_{\ge0},$
we put $A_{n}:=A/\fm^{n+1},$ $D_{n}:=\Spec A_{n}$. In this setting,
we put 
\[
J_{n}X=\{D\text{-morphisms }D_{n}\to X\},
\]
which is a scheme of finite type over $k$ (not $D$). (The functor
represented by this scheme is called a \emph{Greenberg functor}.)
Then $J_{\infty}X$ is again the projective limit of $J_{n}X$, $n\ge0$
and 
\[
J_{\infty}X=\{D\text{-morphisms }D\to X\}.
\]
Let us suppose also that $X$ is flat and of relative dimension $d$
over $D$. Then, in the same way as in the case of $k$-varieties,
we can define the motivic measure on $J_{\infty}X$ and the motivic
integral $\int_{C}\LL^{F}d\mu_{X}$ for a measurable function $F$
on a subset $C$ of $J_{\infty}X$. The motivic integration for a
$k$-variety $X$ is equivalent to the one for the induced $D$-variety
$X\times_{k}D$ with $D=\Spec k[[t]].$

\section{Motivic integration over Deligne-Mumford stacks}

\subsection{Spaces of $G$-covers of a formal disk}

We mean by a\emph{ $G$-cover} of $D^{*}$ an \'{e}tale $G$-torsor $E^{*}\to D^{*}$.
Then a \emph{$G$-cover} of $D$ is the finite cover $E\to D$ associated
to a $G$-cover $E^{*}\to D^{*}$ of $D^{*}.$ Here $E$ is the unique
normal scheme finite over $D$ containing $E^{*}$ as an open dense
subscheme.
Let $\overline K$ be the algebraic closure of $K$.  A $G$-cover $E^{(*)}\to D^{(*)}$  is called
\emph{pointed }if a lift of the natural $\overline K$-point of $D^{(*)}$ to  $E^{(*)}$
is prescribed. 

\begin{conjecture}
\label{conj: G-cov-moduli}There exist the moduli spaces parameterizing
isomorphism classes of the pointed/unpointed $G$-covers of $D$:
we will denote them by $\GCovPt D$ and $\GCov D$ respectively.
\end{conjecture}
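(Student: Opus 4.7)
The plan is to construct $\GCovPt{D}$ and $\GCov{D}$ by parameterizing $G$-covers of $D$ via explicit data arising from local Galois theory, and then assembling these into a reasonable geometric object---most likely an ind-scheme, or possibly an ind-algebraic stack, over $k$.

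First I would reduce to the case of connected pointed covers. Any $G$-cover $E\to D$ decomposes as a disjoint union of connected covers on which subgroups $H\subset G$ act transitively, so $E\cong\bigsqcup\mathrm{Ind}_{H}^{G}(E')$ for some pointed connected $H$-cover $E'$ with $H$ the stabilizer of the chosen component. Thus it suffices to construct, for each subgroup $H\subset G$, a moduli space of pointed connected $H$-covers---equivalently, of pairs $(L/K,\iota)$ with $L/K$ a finite Galois extension and $\iota:\Gal(L/K)\xrightarrow{\sim}H$ an isomorphism---and then to form $\GCovPt{D}$ by disjoint union over $H$ and $\GCov{D}$ as the quotient of $\GCovPt{D}$ by the $G$-conjugation action that forgets the distinguished component.

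Second I would parameterize pointed connected $H$-covers concretely. If $|H|$ is prime to $\characteristic{k}$, then Kummer and Abhyankar theory give only finitely many such covers (since $k$ is algebraically closed), so the moduli is a finite $k$-scheme. In the wild case in equal characteristic $A\cong k[[t]]$ with $\characteristic{k}=p$, Artin--Schreier--Witt theory represents a cyclic $\ZZ/p^{n}\ZZ$-extension by a Witt-vector equation $F(\mathbf{y})-\mathbf{y}=\mathbf{f}$ with $\mathbf{f}\in W_{n}(K)$, modulo the relation $\mathbf{f}\sim\mathbf{f}+F(\mathbf{g})-\mathbf{g}$. After choosing canonical representatives with reduced polar part, the coefficients provide coordinates, and the Swan-conductor stratification gives an exhaustion by locally closed finite-dimensional subschemes, yielding an ind-scheme structure. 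For general wild $H$, I would decompose $H$ as an extension of its tame quotient by a $p$-Sylow subgroup and iterate, gluing cyclic $p$-power pieces from Artin--Schreier--Witt with cohomological obstructions controlling the assembly. In mixed characteristic one would replace Artin--Schreier--Witt by a stratification through the upper-numbering ramification filtration, using Lubin--Tate theory and Eisenstein representations.

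The main obstacle is twofold. First, in mixed characteristic there is no single clean uniformization replacing Artin--Schreier--Witt, so the moduli must be assembled stratum by stratum, and proving these strata fit together into a global ind-scheme will require careful book-keeping with the upper-numbering filtration and with the wild inertia. Second, one must decide whether $\GCov{D}$ is honestly a scheme: the $G$-conjugation action may have nontrivial stabilizers arising from covers with extra automorphisms, so the true moduli object is probably an ind-algebraic \emph{stack}---a natural reformulation given the stacky framework of the rest of the paper.
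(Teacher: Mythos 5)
The statement you are addressing is left as an open conjecture in the paper; the text offers only partial evidence (the tame case is a finite set of points, and Harbater's construction covers $p$-groups in equal characteristic $p$ over $D^{*}$), so there is no ``paper's proof'' to compare against. Your overall plan---reduce to connected pointed covers by choosing a component with stabilizer $H$, parameterize connected $H$-covers explicitly, take a disjoint union over $H$, and quotient by $G$-conjugation---is the expected strategy and matches the paper's heuristic sketch. Identifying $\GCov{D}$ as a quotient, likely stack-theoretic, of $\GCovPt{D}$ is also what the paper asserts. So the architecture is right, but several load-bearing steps are asserted where the difficulties actually live.

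The most concrete gap is your claim that the Swan-conductor stratification yields ``an exhaustion by locally closed finite-dimensional subschemes, yielding an ind-scheme structure.'' The paper explicitly warns against exactly this: it expects $\GCov{D}$ to be a colimit of finite-type schemes $V_{0}\to V_{1}\to\cdots$ along \emph{injective morphisms that are not immersions}, citing Harbater for the phenomenon. In Artin--Schreier(--Witt) theory, bounding the conductor does cut out a finite-dimensional piece, but as the bound increases the transition maps fail to be locally closed immersions (roughly, because adding a higher-order term to a representative can collide with lower strata after passing to the quotient by $\wp$-equivalence). If you build the space as a filtered union of locally closed strata you will get the wrong functor of points. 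Any correct construction has to face this non-immersion behavior head on rather than sidestep it.

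Two further steps are more hope than proof. First, ``gluing cyclic $p$-power pieces\ldots with cohomological obstructions controlling the assembly'' elides the real content: a finite quotient of $G_{K}$ is an extension of a cyclic prime-to-$p$ group by a $p$-group, and the way the tame part acts on the wild part (and the resulting nonabelian cohomology describing the possible extensions and their automorphisms) is exactly where the moduli-theoretic difficulty concentrates; no known clean parameterization exists here even in equal characteristic. Second, the mixed-characteristic case has no Artin--Schreier--Witt substitute, and ``stratification through the upper-numbering filtration, using Lubin--Tate theory and Eisenstein representations'' names some tools but does not produce coordinates or a functor of points. The paper deliberately leaves all of this conjectural, and your proposal, while pointing at the right circle of ideas, would need to resolve precisely these issues to count as a proof.
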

The group $G$ acts on $\GCovPt D$ by changing the given pointing
 or equivalently by replacing the given $G$-action on $E$ with
conjugation. Then we should have 
\[
\GCovPt D/G=\GCov D.
\]
 The conjecture holds if $\characteristic k\nmid\sharp G.$ Indeed
the moduli spaces are just finite points in this case. Moreover $\GCov D\cong\GCov{D^{*}}$
has exactly $\sharp\Conj G$ points. When $A$ has equal characteristic
$p>0$ and $G$ is a $p$-group, Harbater \cite{MR579791} constructed
the (coarse) moduli spaces of $G$-covers of $D^{*}$. If $G_{K}$ denotes
the absolute Galois group of $K,$ then there exists a one-to-one
correspondence between $\GCovPt D$ and the set of continuous homomorphisms
$G_{K}\to G.$ Actually the conjecture above seems to be one of the technical
keys of the whole story in this paper. For instance, the existence
of moduli spaces of $G$-jets/arcs and twisted jets/arcs discussed
below would easily follow from the conjecture along the same line
as in \cite{MR3230848}. A recent result by Tonini \cite{Tonini:2013fk}
may be helpful in addressing this problem. 

Besides Conjecture \ref{conj: G-cov-moduli}, we also expect that
$\GCov D$ is the inductive limit of some series 
\[
V_{0}\to V_{1}\to V_{2}\to\cdots
\]
such that $V_{i}$ are schemes of finite type and $V_{i}\to V_{i+1}$
are injective morphisms. (We expect them to be not immersions but
rather something like immersions followed by Frobenius morphisms,
as such a phenomenon appears in \cite{MR579791}.) We will say
that a subset of $\GCov D$ is \emph{constructible} if it is the image
of a constructible subset of some $V_{i}$. To a constructible subset
$C$ of $\GCov D,$ we associate $[C]\in\hat{\cM}$ in the obvious
way. We obtain the \emph{tautological motivic measure }on $\GCov D$:
\begin{align*}
\tau:\{\text{constructible subsets of }\GCov D\}&\to\hat{\cM}\\
C&\mapsto \tau(C)=[C]
\end{align*}

\subsection{$G$-arcs and jets}

Let $M$ be a $D$-variety endowed with a $G$-action.
\begin{defn}
We define a\emph{ $G$-arc} of $M$ as a $G$-equivariant $D$-morphism
$E\to M$ for some $G$-cover $E\to D$. Two $G$-arcs $E,E'\to M$
are \emph{isomorphic} if there exists an isomorphism $E\to E'$ of
$G$-covers compatible with the given morphisms $E,E'\to M$. 
\end{defn}
For a connected Galois cover $E\to D$ of degree $e,$ we define a
closed subscheme $E_{n}$ of $E$ to be the one having length $1+ne.$
In particular, $E_{0}\cong\Spec k.$ For a non-connected Galois cover
$E=\bigsqcup_{i}E^{i}\to D$ with $E^{i}$ connected components, we
put $E_{n}:=\bigsqcup_{i}E_{n}^{i}.$ 
\begin{defn}
We define a \emph{$G$-$n$-jet} of $M$ as a pair $(E,E_{n}\to M)$
of a $G$-cover $E\to D$ and a $G$-equivariant $D$-morphism $E_{n}\to M.$
Two $G$-$n$-jets $(E,E_{n}\to M)$ and $(E',E_{n}'\to M)$ are \emph{isomorphic}
if there exists an isomorphism $E\to E'$ of $G$-covers such that
the induced isomorphism $E_{n}\to E_{n}'$ is compatible with the
given morphisms to $M.$ \end{defn}
\begin{conjecture}
There exist moduli spaces parameterizing isomorphism classes of $G$-arcs
and $G$-$n$-jets of $M$ for every $n$: we will denote them by
$J_{\infty}^{G}M$ and $J_{n}^{G}M$ respectively. 
\end{conjecture}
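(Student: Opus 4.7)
The plan is to build $J_{n}^{G}M$ as a relative moduli space over $\GCov D$, bundling together, for each $G$-cover $E\to D$, the $k$-scheme parametrizing $G$-equivariant $D$-morphisms $E_{n}\to M$. Once this is done for every $n$, the space $J_{\infty}^{G}M$ is defined as the projective limit along the truncation morphisms induced by the inclusions $E_{n}\hookrightarrow E_{n+1}$. This mirrors the construction of $J_{\infty}X$ recalled in the previous section, now carried out relatively over the base of $G$-covers rather than over a point.

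For a fixed $G$-cover $E\to D$ of degree $e$ with connected components $E=\bigsqcup_{i}E^{i}$, the functor $\cF_{E,n}$ of $D$-morphisms $E_{n}\to M$ is representable by a $k$-scheme of finite type: it is the Greenberg functor of $M\times_{D}E_{n}$ along $E_{n}\to\Spec k$, and decomposes as a product over the components $E_{n}^{i}$. The $G$-action on $E_{n}$ and the given $G$-action on $M$ induce a $G$-action on $\cF_{E,n}$, and the $G$-equivariant morphisms are cut out by the closed conditions $\varphi\circ\sigma_{E_{n}}=\sigma_{M}\circ\varphi$ for $\sigma\in G$. Thus the locus $\cF_{E,n}^{G}$ of $G$-equivariant jets is a closed subscheme of $\cF_{E,n}$, of finite type over $k$.

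To globalize, I would assume, together with Conjecture \ref{conj: G-cov-moduli}, the expected ind-scheme presentation $V_{0}\to V_{1}\to\cdots$ of $\GCov D$ and let $\mathcal{E}_{i}\to V_{i}\times_{k}D$ be the pull-back of the (pointed) universal $G$-cover. Truncating gives $\mathcal{E}_{i,n}\to V_{i}$, and forming the relative Greenberg functor of $M\times_{D}\mathcal{E}_{i,n}$ over $V_{i}$ followed by passage to the $G$-fixed closed subfunctor produces a scheme $J_{n}^{G}M|_{V_{i}}$ of finite type over $V_{i}$. These are compatible with the transition maps $V_{i}\to V_{i+1}$, yielding an ind-scheme; quotienting out by the $G$-action that changes the pointing (so that the underlying $G$-cover is only remembered up to isomorphism) yields $J_{n}^{G}M$. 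The maps $J_{n+1}^{G}M\to J_{n}^{G}M$ induced by restriction along $E_{n}\hookrightarrow E_{n+1}$ assemble into the projective system whose limit is $J_{\infty}^{G}M$.

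The main obstacle is that Conjecture \ref{conj: G-cov-moduli} is itself unresolved in the wild setting, and the expected structure of $\GCov D$ as an ind-limit of schemes of finite type with \emph{non-immersive} transition maps (as emphasised after Conjecture \ref{conj: G-cov-moduli}) means one must be careful about what ``constructible'' and ``of finite type'' mean for $J_{n}^{G}M$, and hence for the would-be measure and integration. A secondary subtlety is that automorphism groups of wild $G$-covers can jump, so the passage from the fine parametrization of pointed $G$-covers with equivariant morphisms to the coarse moduli of isomorphism classes is not automatic, and a stack-theoretic formulation may be more natural; as with $\GCov D$ itself, however, for the purposes of motivic integration one only needs the resulting object to carry a well-defined notion of constructible subset and an associated class in $\hat{\cM}$, and the quotient by the cover automorphisms can be absorbed into the \emph{tautological} measure in the same manner as $\tau$ on $\GCov D$.
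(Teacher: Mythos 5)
This statement is labeled a \emph{conjecture} in the paper and is not proven there; the paper simply posits the existence of these moduli spaces and then works with them heuristically (note the remark in the Conventions that ``we will often make heuristic arguments rather than rigorous ones''). So there is no ``paper's own proof'' to compare against. Your sketch is therefore best read as a plausibility argument, and as such it is consistent with the paper's implicit picture: the relative Greenberg functor over the (conjectural) space of $G$-covers, the closed $G$-fixed locus, and the quotient by the $G$-action that changes the pointing (which matches the paper's Lemma identifying $J_{0}^{G,E}M$ with $M_{0}^{G'}/N_{G}(G')$) are all the right ingredients, and the projective limit along $E_{n}\hookrightarrow E_{n+1}$ is the right way to assemble $J_{\infty}^{G}M$.

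That said, your proposal does not constitute a proof, and you are admirably explicit about why. The construction is conditional on Conjecture 4.2 ($\GCov D$ exists with the expected ind-scheme presentation), which is itself open in the wild case; the transition maps $V_{i}\to V_{i+1}$ being merely injective and not immersions means that ``relative moduli over $\GCov D$'' and ``closed subfunctor'' are not well-behaved notions in the usual scheme-theoretic sense, so representability of $J_{n}^{G}M$ by an honest ind-scheme of finite type is not secured by your argument; and the final quotient step (by cover automorphisms, which can jump in the wild case) is asserted rather than carried out, so one would need to argue separately that a coarse space, or at least an object with a workable notion of constructible subset and class in $\hat{\cM}$, exists. These are genuine gaps, but they are the \emph{same} gaps the paper itself leaves open by phrasing the statement as a conjecture, so your response is faithful to the paper's level of rigor here. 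If anything, one point worth sharpening: you should make explicit that the relative Greenberg/Weil restriction construction you invoke over each $V_{i}$ requires $\mathcal{E}_{i,n}\to V_{i}$ to be finite and flat of constant degree, which is not automatic in families of wild covers and is one concrete place where representability could fail.
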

If this is the case, we would have natural morphisms 
\[
J_{\infty}^{G}M\to\cdots\to J_{n+1}^{G}M\to J_{n}^{G}M\to\cdots\to J_{0}^{G}M\to\GCov D.
\]
For $E\in\GCov D$ and for each $n,$ we put $J_{n}^{G,E}M\subset J_{n}^{G}M$
to be the fiber over $E.$ 
\begin{lem}
Let $E\in\GCov D$ and $G'\subset G$ the stabilizer of a connected
component $E'$ of $E$, that is, $G':=\{g\in G\mid g(E')=E'\}.$
Let $M_{0}:=M\times_{D} D_0$. (Recall $D_0 =\Spec k$.) Then 
\[
J_{0}^{G,E}M\cong M_{0}^{G'}/N_{G}(G').
\]
Here $N_{G}(G')$ is the normalizer of $G'$ in $G.$ \end{lem}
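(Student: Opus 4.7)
The plan is to construct an explicit bijection $J_{0}^{G,E}M\cong M_{0}^{G'}/N_{G}(G')$ by first parametrizing the set of $G$-equivariant morphisms $E_{0}\to M$ in terms of $M_{0}^{G'}$ and then quotienting by the action of $\Aut_{G}(E)$.

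First, since $E_{0}$ is supported in the closed fiber of $D$, every $D$-morphism $E_{0}\to M$ factors through $M_{0}$. Write $E=\bigsqcup_{g\in G/G'}gE'$ with $E'$ the chosen connected component; by the definition of $E_{n}$, the closed piece $E_{0}'$ is a single copy of $\Spec k$. A $G$-equivariant morphism $\phi:E_{0}\to M_{0}$ is therefore determined by the single point $x:=\phi(E_{0}')\in M_{0}(k)$ via $\phi(gE_{0}')=gx$. This rule is well-defined exactly when $g'x=x$ for every $g'\in G'$, that is, when $x\in M_{0}^{G'}$. This gives a bijection between the set of $G$-equivariant morphisms $E_{0}\to M_{0}$ and $M_{0}^{G'}$.

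Next, I would analyze the action of $\Aut_{G}(E)$ on these morphisms by precomposition. Any $\psi\in\Aut_{G}(E)$ permutes the connected components; say $\psi(E')=hE'$ for some $h\in G$. Because $\psi$ is $G$-equivariant, the stabilizers of $E'$ and $hE'$ agree, forcing $hG'h^{-1}=G'$, i.e.\ $h\in N_{G}(G')$, and conversely every $h\in N_{G}(G')$ is realized by some $\psi$. Tracing through the bijection, $\phi\circ\psi$ corresponds to $hx\in M_{0}^{G'}$. The particular lift $\psi$ of $h$ does not matter: two such lifts differ by an element lying in the kernel of $\Aut_{G}(E)\to N_{G}(G')/G'$, which restricts to an automorphism of the reduced point $E_{0}'\cong\Spec k$ and is therefore trivial at the $0$-level. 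Consequently the action factors through $N_{G}(G')/G'$, and since $G'$ fixes $M_{0}^{G'}$ pointwise, the orbit set equals $M_{0}^{G'}/N_{G}(G')$.

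Combining the two steps yields $J_{0}^{G,E}M\cong M_{0}^{G'}/N_{G}(G')$ as claimed. The main delicate point is verifying that different lifts of $h\in N_{G}(G')/G'$ induce the same transformation of $M_{0}^{G'}$, which hinges on $E_{0}'$ being a single reduced $k$-point and hence having only the identity automorphism; once this is granted, the rest is a straightforward bookkeeping of components and stabilizers.
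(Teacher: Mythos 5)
Your first step, identifying $G$-equivariant morphisms $E_{0}\to M_{0}$ with $M_{0}^{G'}$ via $\phi\mapsto\phi(E_{0}')$ after fixing the component $E'$, is correct and is essentially the first sentence of the paper's proof. After that, however, your route diverges from the paper's: you work directly with the automorphism group $\Aut_{G}(E)$ of the $G$-cover, whereas the paper passes through the set of \emph{pointed} $G$-0-jets and quotients by the $G$-action that moves the chosen component, arriving at $\bigl(\bigcup_{H}M_{0}^{H}\bigr)/G$.

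The gap is in the sentence ``conversely every $h\in N_{G}(G')$ is realized by some $\psi$.'' This is not justified, and in fact it fails in general. To build $\psi\in\Aut_{G}(E)$ with $\psi(E')=hE'$ you need a $D$-isomorphism $\tilde\psi:E'\to E'$ satisfying the twisted equivariance $\tilde\psi(g'x)=(h^{-1}g'h)\tilde\psi(x)$ for $g'\in G'$; equivalently, the connected $G'$-cover $E'\to D$ must be isomorphic to its twist by the automorphism $\operatorname{conj}_{h^{-1}}$ of $G'$. Writing $\rho:G_{K}\twoheadrightarrow G'$ for the classifying homomorphism of $E'$, this amounts to $\operatorname{conj}_{h^{-1}}\circ\rho$ being $G'$-conjugate to $\rho$, which forces $h\in Z_{G}(G')\cdot G'$, a subgroup that can be strictly smaller than $N_{G}(G')$ whenever $N_{G}(G')$ induces an outer automorphism of $G'$. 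Concretely, take $G=S_{3}$, $G'=\ZZ/3$, $E'=\Spec k[[t^{1/3}]]$ with $\sigma(t^{1/3})=\zeta t^{1/3}$, and $E=E'\sqcup(12)E'$; the element $h=(12)\in N_{G}(G')$ would require an isomorphism of $E'$ anti-commuting with $\sigma$, i.e.\ an element of $\Gal(E'/D)=\ZZ/3$ conjugating $\sigma$ to $\sigma^{-1}$, which does not exist. So $\Aut_{G}(E)\to N_{G}(G')/G'$ has image $(Z_{G}(G')\cdot G')/G'$, not all of $N_{G}(G')/G'$, and your argument as written only yields $M_{0}^{G'}/Z_{G}(G')$ (equivalently $M_{0}^{G'}/(Z_{G}(G')G')$), not $M_{0}^{G'}/N_{G}(G')$.

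You should also be aware that the paper's own proof is heuristic at exactly this point: it passes from pointed $0$-jets to $\bigl(\bigcup_{H}M_{0}^{H}\bigr)/G$ without carefully distinguishing the quotient by the pointing-changing $G$-action from the quotient by $\Aut_{G}(E)$, and in particular conflates a disjoint union over components with a union over conjugate subgroups. So the discrepancy you would hit upon filling this gap is a genuine subtlety of the lemma itself, not just of your write-up; but the step in your proposal asserting surjectivity onto $N_{G}(G')/G'$ is the concrete point that needs justification and does not hold.
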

\begin{proof}
A pointed $G$-0-jet $\alpha:E_{0}\to M_{0}$ is uniquely determined
by $\alpha(E'_{0})\in M_{0}$, which is invariant under the $G'$-action.
Changing the connected component, $\alpha(E_{0}')$ will be changed
to a point of $M_{0}$ invariant under a subgroup conjugate to $G'.$
Thus we have
\[
J_{0}^{G,E}M\cong\left(\bigcup_{H:\text{ conjugate to \ensuremath{G'}}}M_{0}^{H}\right)/G\cong M_{0}^{G'}/N_{G}(G').
\]

\end{proof}

\subsection{Twisted arcs and jets}

Let $\cX$ be a DM stack of finite type over $D.$ 
\begin{defn}
A \emph{stacky formal disk} is a connected and normal DM stack $\cE$
which is birational and finite over $D.$ A \emph{twisted arc }of
$\cX$ is a representable $D$-morphism $\cE\to\cX$ with $\cE$ a
stacky formal disk. (Note that $\cE\to\cX$ is representable if and
only if the induced map of stabilizers of closed points is injective.)
Two twisted arcs $\cE\to\cX$ and $\cE'\to\cX$ are \emph{isomorphic
}if there exists an isomorphism $\cE\to\cE'$ making the diagram
\[
\xymatrix{\cE\ar[r]\ar[dr] & \cE'\ar[d]\\
 & \cX
}
\]
2-commutative. \end{defn}
\begin{conjecture}
There exists a moduli space parameterizing isomorphism classes of
twisted arcs of $\cX:$ we will denote it by $\cJ_{\infty}\cX.$ 
\end{conjecture}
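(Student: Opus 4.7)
The plan is to construct $\cJ_{\infty}\cX$ by reducing to quotient stacks \'etale-locally, exploiting the preceding (conjectural) moduli spaces $J_{\infty}^{G}M$ and $\GCov D$, and then gluing via \'etale descent. Since $\cX$ is a DM stack of finite type over $D$, around each geometric closed point it admits an \'etale neighborhood of the form $[M/G]\to\cX$ with $M$ a scheme and $G$ the stabilizer group, so the construction reduces to quotient stacks together with a compatibility check on overlaps.

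For $\cX=[M/G]$, I would identify twisted arcs with $G$-arcs modulo the $G$-action. Concretely, a twisted arc $\cE\to[M/G]$ with $\cE=[E'/G']$ a stacky formal disk (so $E'\to D$ is a connected Galois $G'$-cover) unfolds as an injective homomorphism $G'\hookrightarrow G$, determined up to $G$-conjugation by the representability hypothesis, together with a $G'$-equivariant $D$-morphism $E'\to M$ where $G'$ acts on $M$ via the inclusion. Inducing up yields a $G$-equivariant $D$-morphism $E\to M$ for the $G$-cover $E:=G\times_{G'}E'$, which is precisely a $G$-arc of $M$ in the previous sense. Two twisted arcs are isomorphic iff the corresponding $G$-arcs are related by the $G$-action permuting connected components of $E$ (and thereby conjugating $G'$ inside $G$). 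I would therefore set
\[
\cJ_{\infty}[M/G]:=J_{\infty}^{G}M/G,
\]
naturally stratified over $\GCov D$, and endow it with the structure of an ind-object built from the preceding conjectural moduli. The fiber over a $G$-cover $E$ with connected-component stabilizer $G'$ should recover, at the $0$-jet level, the space $M_{0}^{G'}/N_{G}(G')$ already computed in the preceding lemma, which is a sanity check for the construction.

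To globalize, fix an \'etale presentation of $\cX$ by quotient stacks $\{[M_{i}/G_{i}]\to\cX\}_{i}$, refined so that overlaps are again quotient stacks. The verification that the local constructions glue reduces to showing that an \'etale morphism $[N/H]\to[M/G]$ of quotient stacks induces on the corresponding $H$- and $G$-arc spaces exactly the expected base-change morphism, and then invoking descent in the (conjectural) category of ind-algebraic stacks to obtain $\cJ_{\infty}\cX$, with its natural decomposition over $\bigsqcup_{G}\GCov D/\Aut(G)$ recording the isomorphism type of the source stacky formal disk.

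The main obstacle is the gluing step: an \'etale map of quotient stacks need not preserve the ambient group, so on overlaps one must compare $G$-arc spaces for different groups related only through inclusions of local stabilizers, and a functorial behavior of $J_{\infty}^{G}M$ under simultaneous change of $G$ and $M$ (not addressed by the previous conjecture) is required. A secondary, foundational difficulty is that in the wild case $\GCov D$ and $J_{\infty}^{G}M$ exist only as ind-objects rather than as schemes of finite type, so one must first pin down a category of moduli broad enough to host $\cJ_{\infty}\cX$ while still supporting the motivic measures intended in the subsequent sections.
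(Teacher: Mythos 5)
The paper poses this statement as a Conjecture and offers no proof of it; the only nearby proved result is the Lemma $\cJ_{\infty}[M/G]\cong J_{\infty}^{G}M$, derived under the standing assumption that both moduli spaces already exist. Your local step essentially reproduces that Lemma, and indeed you unfold a twisted arc into an injection $G'\hookrightarrow G$ plus a $G'$-equivariant morphism more explicitly than the paper, which simply sets $E:=M\times_{\cX}\cE$. One small discrepancy: you write $\cJ_{\infty}[M/G]:=J_{\infty}^{G}M/G$, but $J_{\infty}^{G}M$ is already defined to parameterize \emph{isomorphism classes} of $G$-arcs, so that permutation of connected components of $E$ and the attendant conjugation of $G'$ are already quotiented out; the extra $/G$ is either redundant or a sign that you are implicitly working with a pointed variant that the paper does not introduce for $G$-arcs, and the paper's Lemma has no such quotient.

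The genuine gap is the gluing step, and to your credit you flag it yourself. The preceding conjectures give no functoriality of $J_{\infty}^{G}M$ under a change $(M,G)\mapsto(N,H)$ induced by an \'{e}tale morphism $[N/H]\to[M/G]$ with $H$ unrelated to $G$, so the \'{e}tale-descent argument you invoke has nothing to act on; nor does the paper supply a category of ind-spaces in which such descent is known to hold and which still carries the motivic measures used later. These missing ingredients are precisely what make the statement a conjecture rather than a theorem, so your proposal should be read as a plausible program toward the conjecture, not a proof of it.
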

Let us suppose that this conjecture holds.
\begin{lem}
Suppose that $\cX$ is a quotient stack $[M/G]$ for $M$ and $G$
as above. Then we have
\[
\cJ_{\infty}\cX\cong J_{\infty}^{G}M.
\]
\end{lem}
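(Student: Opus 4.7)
The plan is to exhibit a natural equivalence of groupoids between twisted arcs of $\cX=[M/G]$ and $G$-arcs of $M$, from which the asserted isomorphism of moduli spaces follows. The basic input is the universal property of the quotient stack: for any DM stack $\cE$, the groupoid of $D$-morphisms $\cE \to [M/G]$ is equivalent to the groupoid of pairs $(P,\phi)$ with $P \to \cE$ an \'etale $G$-torsor and $\phi\colon P \to M$ a $G$-equivariant $D$-morphism. Under this equivalence, $\cE \to [M/G]$ is representable if and only if $P$ is an algebraic space; when $\cE$ is finite over $D$, this is equivalent to $P$ being a scheme finite over $D$.

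Given a twisted arc $\cE \to [M/G]$, the associated $G$-torsor $P$ is therefore a normal scheme, finite over $D$. Since $\cE$ is birational to $D$, the restriction $P \times_D D^* \to D^*$ is an \'etale $G$-torsor, and because $P$ is normal and finite over $D$, it is the unique normal scheme finite over $D$ extending $P \times_D D^*$. Hence $P \to D$ is a $G$-cover in the sense of the paper, and together with $\phi$ it defines a $G$-arc of $M$. Isomorphisms of twisted arcs pass to isomorphisms of the pairs $(P,\phi)$, i.e.\ isomorphisms of $G$-arcs.

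Conversely, given a $G$-arc $\alpha\colon E \to M$, put $\cE := [E/G]$. Since $E/G = D$ is connected, $G$ permutes the connected components of $E$ transitively; choosing a component $E^{\circ}$ with stabilizer $H \subset G$ yields $\cE \cong [E^{\circ}/H]$, a connected and normal DM stack that is finite and birational over $D$, hence a stacky formal disk. The induced morphism $\bar\alpha\colon \cE \to [M/G]$ has pulled-back torsor equal to $E$, which is a scheme, so $\bar\alpha$ is representable and defines a twisted arc.

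These two constructions are mutually inverse on isomorphism classes and are compatible with base change, so the bijection of $k$-points upgrades to an isomorphism of moduli spaces once both sides are known to exist. The main technical point to verify is the representability criterion, namely that $P = \cE \times_{[M/G]} M$ is a scheme if and only if the induced homomorphism from the stabilizer of each closed point of $\cE$ into $G$ is injective; this is exactly the condition singled out in the definition of twisted arcs, and its careful verification at wild (and hence possibly non-cyclic) stacky points of $\cE$ is the only nontrivial step in matching the two notions.
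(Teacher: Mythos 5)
Your proof takes the same approach as the paper's: send a $G$-arc $E \to M$ to the twisted arc $[E/G] \to [M/G]$, and conversely recover $E$ as $M \times_{\cX} \cE$. You simply spell out the details the paper leaves implicit (the universal property of $[M/G]$, the representability criterion guaranteeing the fiber product is a scheme, and that it is indeed a $G$-cover), so the argument is correct and matches the paper's.
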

\begin{proof}
Let $E\to M$ be a $G$-arc. Taking the quotient stacks, we obtain
a twisted arc 
\[
[E/G]\to\cX.
\]
This defines a map $J_{\infty}^{G}M\to\cJ_{\infty}\cX.$ Conversely,
for a twisted arc $\cE\to\cX,$ we put $E:=M\times_{\cX}\cE.$ Then
the map $E\to M$ is a $G$-arc. 
\end{proof}
Although not defining \emph{twisted jets, }we expect:
\begin{conjecture}
For each $n\in\ZZ_{\ge0},$ there exists a moduli space $\cJ_{n}\cX$
of twisted $n$-jets of $\cX$ satisfying: 
\begin{enumerate}
\item If $\cX=[M/G],$ then $\cJ_{n}\cX=J_{n}^{G}M.$
\item There exist truncation maps $\cJ_{n+1}\cX\to\cJ_{n}\cX$ making $\cJ_{\infty}\cX$
the projective limit of $\cJ_{n}\cX,$ $n\in\ZZ_{\ge0}.$
\end{enumerate}
\end{conjecture}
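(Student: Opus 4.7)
The plan is to mimic the definition of $G$-$n$-jet. For a stacky formal disk $\cE$, write $\cE=[E/H]$ where $E\to D$ is the associated connected Galois cover and $H=\mathrm{Aut}(E/D)$, and define the truncation $\cE_n:=[E_n/H]$; this is well-defined because $E_n\subset E$ is $H$-stable. A twisted $n$-jet of $\cX$ is then a pair $(\cE,\phi\colon\cE_n\to\cX)$ with $\phi$ representable, modulo the evident notion of isomorphism. With this definition, the truncation map $\cJ_{n+1}\cX\to\cJ_n\cX$ is forced on us by restriction along the closed immersion $\cE_n\hookrightarrow\cE_{n+1}$, and the limit assertion in (2) reduces to the formal fact that $\cE=\varprojlim_n\cE_n$ as a DM stack, so a representable morphism out of $\cE$ is the same as a compatible system of representable morphisms out of the $\cE_n$.

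For condition (1), assume $\cX=[M/G]$. A representable morphism $\cE_n\to[M/G]$ is a $G$-torsor $P\to\cE_n$ together with a $G$-equivariant map $P\to M$. Pulling back $P$ along the atlas $E_n\to\cE_n$ yields a trivializable $G$-cover of $E_n$ on which $H$ acts commuting with $G$; representability forces the resulting homomorphism $H\to G$ to be injective, so $P$ is identified with $\bigsqcup_{G/H}E_n$, equivalently with the $n$-th truncation of the induced $G$-cover $E':=E\times^{H}G\to D$. The equivariant map $P\to M$ is then the same as an $H$-equivariant map $E_n\to M$, which in turn is the same as a $G$-equivariant map $E'_n\to M$, recovering exactly the definition of a $G$-$n$-jet of $M$ based at $E'$. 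This yields the functorial identification $\cJ_n[M/G]\cong J_n^G M$. For an arbitrary DM stack $\cX$, I would define $\cJ_n\cX$ by étale descent from a groupoid presentation $R\rightrightarrows U$ admitting local quotient descriptions: a morphism $\cE_n\to\cX$ from a finite $D_n$-stack is controlled by its pullback to the atlas together with the groupoid action, so the local models $J_n^{G_i}M_i$ glue.

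The main obstacle will be endowing $\cJ_n\cX$ with a reasonable scheme or ind-scheme structure. The base $\GCov D$ is already, in the wild case, only expected to be presented as a non-immersive inductive limit $V_0\to V_1\to\cdots$ of finite-type schemes, and this pathology propagates to $\cJ_n\cX$. One must check that the functor $\cE\mapsto\mathrm{Hom}^{\mathrm{rep}}_{D_n}(\cE_n,\cX)$ is representable and varies algebraically over each $V_i$: for a fixed $\cE$ this Hom is of finite type over $k$ since $\cE_n$ is finite over $D_n$ and $\cX$ is locally of finite type, but the interaction between the representability constraint and the jumping ramification filtration on $\cE$ as one moves in $\GCov D$ is genuinely wild-specific. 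Finally, one has to verify that the truncation maps $\cJ_{n+1}\cX\to\cJ_n\cX$ are morphisms in whatever category one settles on, and that the projective limit identification holds at that categorical level, not merely set-theoretically; these delicate points are precisely why the statement is posed as a conjecture.
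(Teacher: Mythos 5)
The paper does not actually prove this statement: it is presented purely as a conjecture, introduced with the phrase ``Although not defining twisted jets, we expect,'' and no argument for it is given anywhere in the text. So there is no proof of the paper's to compare your attempt against; the relevant question is whether your blueprint is a reasonable one, and by and large it is. Your definition of $\cE_{n}:=[E_{n}/H]$ and of a twisted $n$-jet as a pair $(\cE,\phi:\cE_{n}\to\cX)$ with $\phi$ representable is exactly the natural mirror of the paper's definition of a $G$-$n$-jet as a pair $(E,E_{n}\to M)$, and your reduction of condition (1) to that definition via the torsor interpretation of a map $\cE_{n}\to[M/G]$, the triviality of $G$-torsors over the Artinian local $E_{n}$, the injectivity of $H\to G$ coming from representability, and induction $E'=E\times^{H}G$ is correct in outline and recovers $J_{n}^{G}M$. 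You also correctly identify where the real work lies: the algebraization of the moduli functor, especially over the ind-scheme $\GCov D$ with its non-immersive transition maps and jumping ramification, which is precisely why the author left this as a conjecture.

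Two caveats worth flagging. First, your claim that ``a representable morphism out of $\cE$ is the same as a compatible system of representable morphisms out of the $\cE_{n}$'' quietly invokes an algebraization statement for morphisms from a formal stacky disk to a finite-type DM stack; this is analogous to the scheme-theoretic fact that arcs are compatible systems of jets, but for stacks it is not automatic and should be isolated as a lemma (one also needs that representability, i.e.\ injectivity on stabilizers, can be detected at the level of $\cE_{0}$, which is true but deserves a sentence). Second, in your étale-descent construction of $\cJ_{n}\cX$ for a general $\cX$ you would need to check that the local identifications $\cJ_{n}[M_{i}/G_{i}]\cong J_{n}^{G_{i}}M_{i}$ are compatible with the groupoid morphisms and that the resulting gluing is independent of the presentation; this is plausible but not free, and it again interacts with the wildness of $\GCov D$. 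The paper's explicit linear-case construction via $\Xi_{B/A,n}^{V}$ in Section~\ref{sec:Untwisting:-A-justification} is the one concrete model available and would serve as a sanity check for whatever general construction you settle on.
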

Assuming these conjectures, when $\cX$ is of pure dimension $d$
(relatively over $D$), we can define the motivic measure $\mu_{\cX}$
on $\cJ_{\infty}\cX$ in a similar way as the one on the arc space
of a variety. The untwisting technique (Section \ref{sec:Untwisting:-A-justification})
would provide an evidence that there are enough stable or measurable
subsets of $\cJ_{\infty}\cX$.

\section{Formulating the change of variables formula for DM stacks}

\subsection{Maps of twisted arc spaces}

Given a morphism $f:\cY\to\cX$ of DM stacks of finite type over $D$,
we can construct a map

\[
f_{\infty}:\cJ_{\infty}\cY\to\cJ_{\infty}\cX
\]
as follows: let $\gamma:[E/G]\to\cY$ be a twisted arc with $E$ a
connected $G$-cover of $D$. Let $y\in\cY$ be the image of the closed
point by $\gamma$ and $x\in\cX$ the image of $y$ by $f.$ Set 
\[
N:=\Ker(G\xrightarrow{\gamma}\Aut(y)\xrightarrow{f}\Aut(x)).
\]
Then $f\circ\gamma$ factors as
\[
f\circ\gamma:\cE=[E/G]\to\cE'=[(E/N)/(G/N)]\to\cX.
\]
Here $[(E/N)/(G/N)]$ is the quotient stack associated to the induced action of the quotient group $G/N$
on the quotient scheme $E/N$.
We define $f_{\infty}(\gamma)$ to be the induced morphism $\cE'\to\cX$. 
\begin{lem}[Almost bijectivity lemma]
Let $f:\cY\to\cX$ be a morphism of DM stacks of finite type over
$D.$ Suppose that for closed substacks $\cY'\subset\cY$ and $\cX'\subset\cX,$
$f$ induces an isomorphism $\cY\setminus\cY'\xrightarrow{\sim}\cX\setminus\cX'.$
Then $f_{\infty}$ restricts to a bijection
\[
\cJ_{\infty}\cY\setminus\cJ_{\infty}\cY'\to\cJ_{\infty}\cX\setminus\cJ_{\infty}\cX'.
\]
\end{lem}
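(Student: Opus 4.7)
The approach is to restrict twisted arcs to the punctured formal disk $D^*$, where $f$ becomes an isomorphism, and use functoriality of the extension procedure from $D^*$ back to $D$. For a twisted arc $\gamma:\cE\to\cY$, write $\cE=[E/G]$ with $E\to D$ a connected $G$-cover; since $E\to D$ is étale with free $G$-action over $D^*$, one has $\cE|_{D^*}\cong D^*$ canonically, and $\gamma|_{D^*}$ becomes an honest morphism $D^*\to\cY$. The central technical input will be an extension lemma: every morphism $D^*\to\cY$ extends, uniquely up to unique isomorphism, to a twisted arc of $\cY$. One establishes this by taking the Galois closure of the field extension of $K$ induced by the $D^*$-point (through an étale chart at the image generic point), normalizing, and passing to the quotient stack.

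For injectivity, suppose $f_\infty(\gamma_1)\cong f_\infty(\gamma_2)$ with common class $\delta:\cE'\to\cX$. Restricting to $D^*$ yields $f\circ\gamma_i|_{D^*}\cong\delta|_{D^*}$, and because $f$ restricts to an isomorphism $\cY\setminus\cY'\xrightarrow{\sim}\cX\setminus\cX'$, we get a canonical isomorphism $\gamma_1|_{D^*}\cong\gamma_2|_{D^*}$ in $\cY\setminus\cY'$. The extension lemma then produces the desired isomorphism $\gamma_1\cong\gamma_2$ of twisted arcs.

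For surjectivity, start with $\delta:\cE'\to\cX$ whose image is not contained in $\cX'$. Restrict to $D^*$ and compose with the inverse of $f|_{\cY\setminus\cY'}$ to obtain $\phi:D^*\to\cY\setminus\cY'\subset\cY$; extend $\phi$ uniquely to a twisted arc $\gamma:\cE\to\cY$. To see $f_\infty(\gamma)\cong\delta$, observe that $f_\infty$ by construction modifies $\gamma$ only by quotienting the domain over the closed point by the kernel $N$ in the stabilizer map, leaving $\gamma|_{D^*}$ unchanged; thus $f_\infty(\gamma)|_{D^*}\cong f\circ\gamma|_{D^*}\cong\delta|_{D^*}$. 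The extension lemma applied to $\cX$ then forces $f_\infty(\gamma)\cong\delta$.

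I expect the main obstacle to be the extension lemma itself, and in particular the compatibility check in surjectivity: one must verify that the stabilizer at the closed point of the canonical extension $\cE$ of $\phi$ --- determined intrinsically by the monodromy of $\phi$ around the puncture, i.e.\ by the image of the local Galois group in the automorphism group of the generic geometric point of $\cY$ --- matches the stacky structure of $\cE'$ after quotienting by $N$. Once this compatibility is set up carefully, both directions reduce to the tautological invertibility of $f|_{\cY\setminus\cY'}$.
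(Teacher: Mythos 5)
Your overall strategy — restrict to $D^{*}$, where $f$ is an isomorphism, then extend back — is the right idea and is, underneath, the same mechanism the paper uses; the paper packages it as a single inverse construction (normalize the irreducible component of $\cE\times_{\cX}\cY$ dominating $\cE$) rather than as injectivity/surjectivity plus an abstract extension lemma. But your ``extension lemma'' is overstated, and that is a real gap, not merely a compatibility worry. As written, ``every morphism $D^{*}\to\cY$ extends uniquely to a twisted arc of $\cY$'' is false without a properness hypothesis: take $\cY=\AA_{D}^{1}\setminus\{0\}\hookrightarrow\AA_{D}^{1}=\cX$ with $\cY'=\emptyset$, $\cX'=\{0\}$; a $D^{*}$-point limiting to $0$ has no extension. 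Your sketch (pass to the Galois closure, normalize, quotient) produces a \emph{candidate source} $\cE$ but never verifies that the morphism to $\cY$ extends across the closed point — that is precisely where the valuative criterion of properness for DM stacks must be invoked, and it is exactly what the paper's fiber-product-plus-normalization encodes. The lemma is quoted with no properness hypothesis, but it lives inside a section whose purpose is Conjecture \ref{conj--main}, where $f$ is assumed proper birational; you should make that assumption explicit and cite properness at the extension step.

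Two smaller remarks. First, the compatibility you flag at the end — that the stabilizer at the closed point of $\cE$, after quotienting by $N$, matches $\cE'$ — is in fact automatic: representability of $\cE'\to\cX$ forces the stabilizer of $\cE'$ to be the image $G/N$ of $G$ in $\Aut(x)$, so once the generic fibers agree, normality and finiteness over $D$ pin down $\cE'$ uniquely. Second, once you do assume $f$ proper, you do not need the extension lemma in the strong form ``for all $D^{*}\to\cY$''; you only need to extend those $D^{*}$-points that already sit under an arc $\cE'\to\cX$, and for those the paper's construction hands you the extension directly — so the cleanest fix is simply to replace your extension lemma with the paper's $\tilde{\cE}=\big(\cE\times_{\cX}\cY\big)^{\nu}$ construction and then run your restriction-to-$D^{*}$ argument only for the uniqueness half.
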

\begin{proof}
Let $(\gamma:\cE\to\cX)\in\cJ_{\infty}\cX\setminus\cJ_{\infty}\cX'.$
Then we put $\tilde{\cE}$ to be the normalization of the irreducible
component of $\cE\times_{\cX}\cY$ dominating $\cE.$ The induced
morphism $\tilde{\gamma}:\tilde{\cE}\to\cY$ is the unique twisted
arc with $f_{\infty}(\tilde{\gamma})=\gamma.$ This shows the lemma. 
\end{proof}
If $\cY$ and $\cX$ have pure dimension $d$, and if $\dim\cX',\,\dim\cY'<d$,
then we expect that $\cJ_{\infty}\cX'$ and $\cJ_{\infty}\cY'$ have
measure zero as subsets of $\cJ_{\infty}\cX$ and $\cJ_{\infty}\cY$
respectively. Then the lemma says that $f_{\infty}:\cJ_{\infty}\cY\to\cJ_{\infty}\cX$
is almost bijective. Thanks to this, we can expect that motivic integrals
on $\cJ_{\infty}\cX$ are transformed into ones on $\cJ_{\infty}\cY$
and vice versa. The formula describing the transform will be called
the \emph{change of variables formula}.

\subsection{Order functions associated to submodules}

To formulate the change of variables formula, we need to introduce
order functions on the space of twisted arcs.
\begin{defn}
Let $\cX$ be a DM stack of pure dimension $d$, let $N\subset M$
be coherent $ $$\cO_{\cX}$-modules which are generically of rank
one. (In practice, we often take $\cO_{\cX}$, $\Omega_{\cX/D}^{d}$
or $\omega_{\cX/D}$ as $M.$) Then we define the \emph{order function}
of the submodule $N$ 
\[
\ord N:\cJ_{\infty}\cX\to\QQ_{\ge0}\cup\{\infty\}
\]
as follows: let $\gamma:[E/G]\to\cX$ be a twisted arc with $E$ a
connected $G$-cover of $D.$ If $N_{E}$ and $M_{E}$ denote the
pull-backs of $N$ and $M$ to $E,$ and if $M_{E}/\tors$ is free
of rank one, then 
\[
\Im(N_{E}\to M_{E}/\tors)=s^{n}(M_{E}/\tors)
\]
for some $n\in\ZZ_{\ge0}$ with $s$ a uniformizer of the function field of $E$. Then we define
\[
(\ord N)(\gamma):=\frac{n}{\sharp G}.
\]
If $M_{E}/\tors$ is not free of rank one, then we put $(\ord N)(\gamma)=\infty$,
expecting that this does not happen for almost all $\gamma.$ \end{defn}
\begin{rem}
Generalizing this, we can also define the order function of a \emph{fractional}
submodule $N$ of $M,$ that is, an $\cO_{\cX}$-submodule of $M\otimes_{\cO_{\cX}}Q$
with $Q$ the sheaf of total quotient rings. Then the function may
take negative values. \end{rem}
\begin{defn}
For a generically \'{e}tale morphism $f:\cY\to\cX$ of DM stacks,
we define its \emph{Jacobian order function, $\ord\Jac_{f},$ }as
the order function of the submodule $\Im(f^{*}\Omega_{\cX/D}^{d}\to\Omega_{\cY/D}^{d})\subset\Omega_{\cY/D}^{d}.$
\end{defn}

\subsection{The change of variables formula}

The following is the main conjecture of this paper:
\begin{conjecture}[Main Conjecture]
\label{conj--main}For each DM stack $\cX$ of finite type and pure
dimension over $D$, there exists a canonically defined function
\[
w_{\cX}:\cJ_{\infty}\cX\to\left(\bigcup_{x\in\cX}\frac{1}{\sharp\Aut(x)}\ZZ\right)
\]
such that if $\cX$ is a scheme, then $w_{\cX}\equiv0.$ (We call
this the \emph{weight function on $\cJ_{\infty}\cX$.}) Moreover,
for a proper birational morphism $f:\cY\to\cX$ of such stacks, and
for a measurable function $F:\cJ_{\infty}\cX\supset C\to\frac{1}{r}\ZZ\cup\{\infty\},$
we have
\[
\int_{C}\LL^{F+w_{\cX}}d\mu_{\cX}=\int_{f_{\infty}^{-1}(C)}\LL^{F\circ f_{\infty}-\ord\Jac_{f}+w_{\cY}}d\mu_{\cY}.
\]

\end{conjecture}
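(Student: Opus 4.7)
The plan is to proceed via the \emph{untwisting} technique foreshadowed in the introduction, reducing the conjectured formula for DM stacks to the classical change of variables formula for schemes of Denef--Loeser and Sebag. Since the statement is local on $\cX$ in the étale topology, I would first reduce to the case where $\cX=[M/G]$ is a global quotient stack, so that by the preceding lemma $\cJ_{\infty}\cX\cong J_{\infty}^{G}M$ and $f$ is induced by a $G$-equivariant morphism of $G$-schemes. Both integrals may then be analyzed fiber by fiber over $\GCov D$, with the fiber $J_{\infty}^{G,E}M$ parametrizing $G$-equivariant $D$-morphisms $E\to M$.

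The untwisting step proper is to replace, for each $G$-cover $E\to D$ with stabilizer $H\subset G$ of a connected component $E'\subset E$, the fiber $J_{\infty}^{G,E}M$ by a classical arc space of an ``untwisted'' $D$-scheme $\tilde M_{E'}$ built functorially from $M$ and $E'$ (morally an open part of an étale slice of $M^{H}$, deformed by the Galois action on $E'/D$). One expects an $\LL^{n}$-fibration, in the sense of item (5) of the conventions section, from $J_{\infty}^{G,E}M$ onto $J_{\infty}\tilde M_{E'}$, with the exponent $n$ depending only on $E'$ and on the $H$-action on the tangent space of $M$ at the image point. Performing this untwisting on both $\cY$ and $\cX$, the morphism $f_{\infty}$ descends to an honest proper birational morphism of untwisted arc spaces, to which the classical change of variables formula applies.

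Tracking $\LL$-exponents through this dictionary produces three contributions: the Jacobian order $\ord\Jac_{f}$ that already appears in the statement, together with two additional ``shift'' terms depending only on the stacky structure of $\cX$, respectively $\cY$, along the relevant closed point. These shifts are precisely what one should take as the definitions of $w_{\cX}$ and $w_{\cY}$; vanishing on schemes is automatic because there every stacky formal disk reduces to $D$ itself and the stabilizer $H$ is trivial. Integrating the resulting fiberwise equality against the tautological measure $\tau$ on $\GCov D$ then recovers the global identity.

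The main obstacle will be the canonical construction of the weight function in the wild case. In the tame case one can use the classical age function of Denef--Loeser--Reid, because the $H$-action on the tangent space diagonalizes over $k$; in the wild case no such diagonalization exists, and the shift has to be extracted from subtler invariants of the Artin--Schreier--Witt extensions classified by $\GCov D$, ideally in a way that is manifestly independent of the presentation $\cX=[M/G]$. Closely related is the foundational burden of Conjecture~\ref{conj: G-cov-moduli}: one needs a workable ind-scheme structure on $\GCov D$, measurability of the order functions that appear, an $\LL^{n}$-fibration statement for the untwisting map, and the measure-zero statement strengthening the almost-bijectivity lemma. Once these foundations are in place, the identity should follow by matching $\LL$-exponents through the untwisting construction.
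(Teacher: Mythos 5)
Your plan shares the paper's overall strategy---untwist the twisted arcs, apply the classical Denef--Loeser--Sebag change of variables, and read off the weight function from the resulting $\LL$-exponent shift---but your guess for what the untwisting actually does is not the one the paper uses, and the difference is exactly where the wild difficulty lives. You propose to replace $J_{\infty}^{G,E}M$ by the arc space of an auxiliary scheme $\tilde M_{E'}$, ``morally an open part of an \'etale slice of $M^{H}$, deformed by the Galois action on $E'/D$.'' In the paper's linear model $M=V=\AA_D^d$, the untwisting is not a passage to a slice or a fixed locus at all: one takes the free $A$-module $\Xi_{B/A}^{V}\subset B^{d}$ where the linear $G$-action $\rho$ and the Galois $G$-action $\delta$ agree, chooses a basis with matrix $Q\in M_d(B)$, and defines $u$ as the $B$-linear self-map of $V_E$ given by $t^{l}Q^{-1}$. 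This $u_{\infty}$ is a \emph{bijection} $J_{\infty}^{G,E}V\to J_{\infty}^{\geq l}V$ onto a subset of arcs of $V$ \emph{itself} with high valuation, and the $\LL^{n}$-fibration behaviour only appears at the finite levels $u_n:\pi_n(J_\infty^{G,E}V)\to J_n^{\geq l}V$. Your version puts the $\LL^n$-fibration at the $\infty$-level, which would discard the almost-bijectivity that the whole argument hinges on. Relatedly, the paper does not ``untwist on both $\cY$ and $\cX$'': it reduces to the single morphism $\phi:[M/G]\to M/G$ and untwists only the stacky source, since the coarse space is already a scheme.

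The other genuine gap is the origin of the weight function. You say the shift should depend only on $E'$ and on the $H$-action on the tangent space, which is the tame (age) picture; but the paper's formula $w_V(E)=d-\frac{1}{\sharp G}\length \frac{B}{(\det Q)}-\dim V_0^{G'}$ involves $\det Q$, i.e.\ the size of the module $\Xi_{B/A}^{V}$ inside $B^{d}$, which is an integral, not merely tangential, invariant of the pair $(V,E)$. In the wild case this length is not recoverable from the tangent representation alone. So your proposal correctly names the obstacle (``no such diagonalization exists'') but leaves it as a mystery, whereas the paper's $\Xi_{B/A}^V$ construction is precisely the mechanism that replaces diagonalization. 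Finally, a caveat in fairness to you: the paper itself offers only a heuristic justification of this conjecture (the ``key lemma'' is itself conjectural), so neither argument is a proof; but the specific untwisting and the $\det Q$ ingredient are the paper's essential content, and your sketch does not reconstruct them.
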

This is basically of the same form as the formula in the tame setting
proved in \cite{MR2271984}. However differences lie for instance
in the facts that in the wild case, the moduli spaces of twisted arcs
are much larger and that weight functions may take negative and unbounded
values. The conjecture has been proved in \cite{MR3230848} for
the morphism from the quotient stack $[\AA_{k}^{d}/G]$ to the quotient
variety $\AA_{k}^{d}/G$ associated to a linear action of $G=\ZZ/p\ZZ$
with $p=\characteristic k$.

\section{More details on the linear case}

Let $G\subset GL_{d}(A)$ be a finite subgroup, which acts on $V:=\AA_{D}^{d}$:
we call $V$ a $G$\emph{-representation} over $D$. Then we put $\cX:=[V/G]$
and $X=V/G,$ the quotient stack and variety. In this section, we
will study the detailed structure of twisted arc and jet spaces and
obtain a conjectural ``explicit'' expression of the weight function
$w_{\text{\ensuremath{\cX}}}$.

\subsection{Spaces of arcs and jets}

Let 
\[
E=\Spec B\to D=\Spec A
\]
be a $G$-cover. We will study the structure of $J_{\infty}^{G,E}V$
and $\pi_{n}(J_{\infty}^{G,E}V),$ $n\in\ZZ_{\ge0},$ where $\pi_{n}$
are the truncation maps. Let $A[x_{1},\dots,x_{d}]$ be the coordinate
ring of $V.$ We have the induced $A$-linear action of $G$ on $A^{d}=\bigoplus_{i=1}^{d}A\cdot x_{i}$.
This uniquely extends to a $B$-linear action on $B^{d}$. Let 
\[
\rho:G\to\Aut_{B}(B^{d})
\]
be the corresponding map. On the other hand, $G$ acts on $B$ and
diagonally on $B^{d}$, which induces a map 
\[
\delta:G\to\Aut_{A}(B^{d}).
\]
Since $\rho(g),$ $g\in G$ are represented by $d\times d$ matrices
with entries from $A$, for every $g,g'\in G,$ $\rho(g)$ and $\delta(g')$
are commutative. Hence the map
\[
\rho\delta^{-1}:G\to\Aut_{A}(B^{d}),\,g\mapsto\rho(g)\circ\delta(g)^{-1}
\]
is a group homomorphism. 
\begin{defn}
We define $\Xi_{B/A}^{V}\subset B^{d}$ to be the $G$-invariant subset
for $\rho\delta^{-1},$ which is the locus where the two $G$-actions
$\rho$ and $\delta$ coincide. For $n\in\ZZ_{\ge0},$ writing $E_{n}=\Spec B_{n},$
we put $\Xi_{B/A,n}^{V}$ to be the image of $\Xi_{B/A}^{V}$ in $B_{n}^{d}.$\end{defn}

By definition, these modules  $\Xi_{B/A}^{V}$, $\Xi_{B/A,n}^{V}$ have the unique
natural $G$-action. 

\begin{lem}\label{lem:id}
We can identify $J_{\infty}^{G,E}V$ with $\Xi_{B/A}^{V}/G$ and $\pi_{n}(J_{\infty}^{G,E}V)$
with $\Xi_{B/A,n}^{V}/G.$\end{lem}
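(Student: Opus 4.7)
The plan is to translate both statements into the language of algebras and then check that the $G$-equivariance condition on morphisms is exactly the fixed-point condition defining $\Xi_{B/A}^V$. First I would use the adjunction between $\Spec$ and global sections: a $D$-morphism $E\to V$ is the same as an $A$-algebra homomorphism $\varphi:A[x_{1},\dots,x_{d}]\to B$, and such a $\varphi$ is uniquely determined by the $d$-tuple $b:=(\varphi(x_{1}),\dots,\varphi(x_{d}))\in B^{d}$. This gives a tautological bijection between (not necessarily equivariant) $D$-morphisms $E\to V$ and $B^{d}$.

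Next I would write out $G$-equivariance explicitly. The action of $g\in G$ on the source is via $\rho$, so in coordinates $g\cdot x_{i}=\sum_{j}\rho(g)_{ji}\,x_{j}$; the action on the target is the Galois action $\delta$, extended component-wise to $B^{d}$. The condition $\varphi\circ g = g\circ\varphi$ then reads
\[
\rho(g)(b)=\delta(g)(b)\qquad(\forall g\in G),
\]
equivalently $\rho(g)\delta(g)^{-1}(b)=b$. Because $\rho$ is $A$-linear and $\delta$ fixes $A$, the operators $\rho(g)$ and $\delta(g')$ commute (this is the observation already made just before the definition of $\Xi_{B/A}^V$), so $\rho\delta^{-1}$ is a genuine group homomorphism and the displayed condition is precisely that $b$ lies in its fixed locus $\Xi_{B/A}^{V}$. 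Combined with the previous paragraph, this identifies $J_{\infty}^{G,E}V$ with $\Xi_{B/A}^{V}$.

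For the jet statement, recall that the truncation $\pi_{n}$ sends an arc $\varphi:A[x_{1},\dots,x_{d}]\to B$ to its composition with the quotient $B\twoheadrightarrow B_{n}$. Under the identification above, this corresponds to the reduction map $B^{d}\to B_{n}^{d}$ restricted to $\Xi_{B/A}^{V}$, and its image is $\Xi_{B/A,n}^{V}$ by the very definition of the latter. Hence $\pi_{n}(J_{\infty}^{G,E}V)=\Xi_{B/A,n}^{V}$.

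The only place where care is required is the convention for the $G$-action: one must fix whether $\rho(g)$ acts on row or column vectors, and whether $\varphi\circ g=g\circ\varphi$ or $\varphi\circ g^{-1}=g\circ\varphi$ encodes equivariance, so that everything lines up to give the equality $\rho(g)(b)=\delta(g)(b)$ rather than the inverse relation. This is routine bookkeeping, and is the only potential obstacle; once the conventions are fixed consistently with the definition of $\Xi_{B/A}^V$, both assertions of the lemma follow immediately from the $\Spec$/global-sections adjunction.
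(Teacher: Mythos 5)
Your proof is correct and follows essentially the same route as the paper's (which simply asserts that a $G$-arc corresponds to an equivariant $A$-algebra homomorphism $A[x_1,\dots,x_d]\to B$, which in turn corresponds to an element of $\Xi_{B/A}^V$); you just spell out the $\Spec$/global-sections adjunction and the translation of equivariance into the fixed-point condition $\rho(g)(b)=\delta(g)(b)$ that the paper leaves implicit.
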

\begin{proof}
A $G$-arc $E\to V$ corresponds to an equivariant homomorphism
\[
A[x_{1},\dots,x_{d}]\to B.
\]
In turn, this corresponds to an element of $\Xi_{B/A}^{V}.$ 
The set $J_{\infty}^{G,E}V$ is identified with the set of such equivariant homomorphisms
modulo the $G$-action, in turn with  $\Xi_{B/A}^{V}/G$.
The assertion
on jets follows from this.\end{proof}

\begin{prop}
$\Xi_{B/A}^{V}$ is a free $A$-module of rank $d.$ Moreover it is
saturated, that is, if $ax\in\Xi_{B/A}^{V}$ with $a\in A$ and $x\in B^{d}$,
then $x\in\Xi_{B/A}^{V}.$\end{prop}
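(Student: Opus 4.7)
The proposition has two parts; saturation is almost immediate, and I would reduce freeness to a rank computation over $K:=\mathrm{Frac}(A)$. For saturation: given $a\in A\setminus\{0\}$ and $x\in B^{d}$ with $ax\in\Xi_{B/A}^{V}$, the $A$-linearity of $\rho(g)$ combined with the fact that $a\in A$ is fixed by $\delta(g)$ gives
\[
a\rho(g)(x)=\rho(g)(ax)=\delta(g)(ax)=a\delta(g)(x),
\]
so $\rho(g)x=\delta(g)x$ by torsion-freeness of $B^{d}$ over $A$ (which holds because $B$ is a finite torsion-free module over the complete DVR $A$, hence $A$-free). The same $A$-freeness of $B^{d}$ implies every $A$-submodule, in particular $\Xi_{B/A}^{V}$, is finitely generated and torsion-free, hence $A$-free; only the rank requires further work.

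For the rank I would pass to the generic fibre. Set $L:=B\otimes_{A}K$; then $\Spec L\to\Spec K$ is an \'etale $G$-torsor and $L$ is a Galois $G$-\'etale $K$-algebra. Clearing denominators shows
\[
\Xi_{B/A}^{V}\otimes_{A}K=\{y\in L^{d}:\rho(g)y=\delta(g)y\ \forall g\in G\},
\]
and this in turn is the space of $G$-equivariant $K$-algebra maps $K[x_{1},\dots,x_{d}]\to L$, hence equal to $\mathrm{Hom}_{K[G]}(V_{K}^{*},L)$ where $V_{K}^{*}:=\bigoplus_{i}K\cdot x_{i}$ is the $K$-linear $G$-module induced by $\rho$. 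Now by the normal basis theorem (in the form for Galois $G$-algebras, which covers both the connected and disconnected cases) $L\cong K[G]$ as $K[G]$-modules, and Frobenius reciprocity yields
\[
\mathrm{Hom}_{K[G]}(V_{K}^{*},K[G])\cong\mathrm{Hom}_{K}(V_{K}^{*},K),
\]
a $K$-vector space of dimension $d$. Explicitly, a $K[G]$-linear map $\phi:V_{K}^{*}\to K[G]$ is determined by its projection $\phi_{e}:V_{K}^{*}\to K$ onto the identity component via $\phi(w)=\sum_{g}\phi_{e}(g^{-1}w)\,g$. Hence $\mathrm{rank}_{A}\Xi_{B/A}^{V}=d$.

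The main obstacle is the rank computation: its key input is the identification of $L$ with the regular $K[G]$-module, after which Frobenius reciprocity immediately pins down the dimension. The book-keeping of $A$- versus $K$-linearity, and of $\rho$ versus $\delta$, is what makes the descent identification $\Xi_{B/A}^{V}\otimes_{A}K\cong\mathrm{Hom}_{K[G]}(V_{K}^{*},L)$ work.
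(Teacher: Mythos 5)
Your proof is correct; the saturation and freeness steps coincide with the paper's, but the rank computation takes a genuinely different route. The paper identifies $\Xi_{B/A}^{V}\otimes_{A}K$ with $(p_{*}(B^{d}\otimes_{B}L))^{G}$ for the $G$-torsor $p:\Spec L\to\Spec K$ and then computes the rank by ``trivializing the $G$-torsor with a base change'' --- a faithfully flat descent argument, after which the invariants are visibly of rank $d$. You instead recognize $\Xi_{B/A}^{V}\otimes_{A}K$ as $\mathrm{Hom}_{K[G]}(V_{K}^{*},L)$, invoke the normal basis theorem for Galois $G$-algebras to get $L\cong K[G]$ as $K[G]$-modules, and finish by Frobenius reciprocity, giving $\mathrm{Hom}_{K[G]}(V_{K}^{*},K[G])\cong\mathrm{Hom}_{K}(V_{K}^{*},K)$ (valid in any characteristic since $\mathrm{Ind}$ and $\mathrm{CoInd}$ agree for finite groups). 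The two are morally close --- faithfully flat descent along the torsor is exactly what underlies the Galois-algebra normal basis theorem --- but yours stays entirely inside $K[G]$-module theory and makes the dimension count explicit via $\phi\mapsto\phi_{e}$, whereas the paper's phrasing is quicker and avoids citing the disconnected-case normal basis statement. Both work; you were right to flag the disconnected case, since $L$ need not be a field when $E$ is not connected, and the $G$-algebra version of the normal basis theorem is needed there.
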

\begin{proof}
For $a\in A,$ the multiplication map, $B^{d}\to B^{d},$ $x\mapsto ax,$
commutes with all $\rho(g)$ and $\sigma(g).$ This shows the saturatedness.
The freeness follows from the facts that it is torsion-free and that
$A$ is a PID. It remains to show that $ $the rank is $d.$ 

The $G$-action $\rho\delta^{-1}$ makes $B^{d}$ a $G$-equivariant
$B$-module. Let $L$ be the fraction field of $B$ and $p:\Spec L\to\Spec K$
the associated $G$-torsor. Then $\Xi_{B/A}^{V}\otimes_{A}K$ is identified
with $(p_{*}(B^{d}\otimes_{B}L))^{G}.$ We can see that this has rank
$d,$ for instance, by trivializing the $G$-torsor with a base change.
\end{proof}
From the proposition, we can identify $\Xi_{B/A,n}^{V}$ with affine
spaces over $k$, using Witt vectors in the mixed characteristic case
(for instance, see \cite[page 276]{MR1045822}).  Now we can easily
deduce the following:

\begin{cor}
For every $n\in\ZZ_{\ge0},$
 $\pi_{n+1}(\cJ_{\infty}\cX)\to\pi_{n}(\cJ_{\infty}\cX)$ is an $\LL^{d}$-fibration, where
  $\pi_{n}:\cJ_{\infty}\cX\to\cJ_{n}\cX$ is the truncation map.
\end{cor}

\subsection{Weight functions in the linear case}

\begin{defn}
\label{def--weight}Let $\alpha_{1}=(\alpha_{1j})_{1\le j\le d},\dots,\alpha_{d}=(\alpha_{dj})_{1\le j\le d}\in B^{d}$
be an $A$-basis of $\Xi_{B/A}^{V}$ and put $Q:=(\alpha_{ij})\in M_{d}(B).$
Let $G'\subset G$ be the stabilizer of some connected component of
$E.$ Let $V_{0}:=V\times_{D}D_{0}.$ Then we define\footnote{Afterwards it turned out that
this definition is not quite correct; the term $\dim V_0 ^{G'}$ needs to be modified,
although they coincide in some cases; see \cite{wild-p-adic}.} a \emph{weight
function} $w=w_{V}:\GCov D\to\QQ$ by 
\begin{align*}
w(E): & =d-\frac{1}{\sharp G}\length\frac{B}{(\det Q)}-\dim V_{0}^{G'}\\
 & =d-\frac{1}{\sharp G}\length\frac{B^{d}}{B\cdot\Xi_{B/A}^{V}}-\dim V_{0}^{G'}.
\end{align*}
Note that if $B$ is a domain, then the middle terms are written as
$v_{B}(\det Q)/\sharp G$ with $v_{B}$ the normalized valuation of
$B$.\end{defn}
\begin{conjecture}
\label{conj: weight-linear}For the quotient stack $\cX=[V/G]$ associated
to a $G$-representation $V$ over $D,$ the weight function $w_{\cX}$
from Conjecture \ref{conj--main} factors as
\[
w_{\cX}:\cJ_{\infty}\cX=J_{\infty}^{G}M\to\GCov D\xrightarrow{w_{V}}\QQ.
\]

\end{conjecture}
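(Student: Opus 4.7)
The plan is to extract the formula for $w_\cX$ by applying the change of variables formula of Conjecture \ref{conj--main} to the canonical proper birational morphism $f \colon \cX = [V/G] \to X = V/G$, which is an isomorphism over the free locus of the $G$-action. Because $X$ is a scheme, the main conjecture forces $w_X \equiv 0$, so the formula collapses to
\[
\int_C \LL^F \, d\mu_X \;=\; \int_{f_\infty^{-1}(C)} \LL^{F \circ f_\infty - \ord \Jac_f + w_\cX} \, d\mu_\cX,
\]
and the unique $w_\cX$ making this identity hold is what I must identify with $w_V$.

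First I would decompose $\cJ_\infty \cX \cong J_\infty^G V$ as a disjoint union of fibers $J_\infty^{G,E} V$ over $E \in \GCov D$; by the Almost Bijectivity Lemma, $f_\infty$ is almost bijective, so the formula can be tested sector by sector. Using the identification $J_\infty^{G,E} V = \Xi_{B/A}^V$ together with the corollary, which gives $\pi_n(J_\infty^{G,E} V) \cong \AA_k^{dn} \times_k (V_0^{G'}/N_G(G'))$ and the $\LL^d$-bundle structure of the truncation tower, one computes $\mu_\cX(J_\infty^{G,E} V) = [V_0^{G'}/N_G(G')]$, which already explains the summand $-\dim V_0^{G'}$ of Definition \ref{def--weight}.

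The key technical step is to compute $\ord \Jac_f$ on $J_\infty^{G,E} V$. Fix the basis $\alpha_1, \ldots, \alpha_d$ of $\Xi_{B/A}^V$ and its matrix $Q = (\alpha_{ij})$ from Definition \ref{def--weight}; a generic twisted arc in the fiber is obtained by base-changing the universal $G$-equivariant map $E \otimes_A A[c_1, \dots, c_d] \to V$ given by $x_i \mapsto \sum_j c_j \alpha_{ij}$. The image of $f^*\Omega^d_{X/D}$ in $\Omega^d_{\cX/D}$, pulled back along this universal arc, should be identified---via the description of $\omega_V$ in terms of a $G$-invariant top form on $X$---as the $B$-submodule generated by an expression whose $B$-valuation equals $v_B(\det Q)$ plus a piece depending on the $c_j$ that, after integration, reproduces the classical motivic volume of the image sector in $J_\infty X$. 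Plugging this into the change of variables formula and solving for $w_\cX$ on the $E$-fiber yields the remaining terms $d - \frac{1}{\sharp G}\length B/(\det Q)$ of $w_V(E)$, establishing the claimed factorization through $\GCov D$.

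The principal obstacle is the identification $\ord \Jac_f = \frac{1}{\sharp G}\length B/(\det Q)$ in the wild case. In the tame case one can diagonalize $G' \subset GL_d(k)$, read off both $\Xi_{B/A}^V$ and the different of $V \to X$ by eigenspace analysis, and verify the identity by hand. In the wild case neither the representation nor the cover $V \to X$ diagonalizes and the different involves higher ramification jumps, so a more intrinsic argument is required---most likely a uniformization-type statement expressing $f^*\Omega^d_{X/D}$ as the sub-$B$-module of $\Omega^d_{V/D}$ generated by $(\det Q) \cdot dy_1 \wedge \cdots \wedge dy_d$ for a suitable family of $G$-invariant local coordinates $y_i$ on $X$. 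A subsidiary obstacle is that the entire argument presupposes the earlier conjectural moduli spaces $\GCov D$, $\cJ_\infty \cX$ and their motivic measures, whose basic properties must be secured before any step above can be made rigorous.
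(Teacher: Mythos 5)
Your overall strategy---apply the change of variables formula to the proper birational morphism $\phi\colon\cX=[V/G]\to X=V/G$, use $w_{X}\equiv 0$, and solve for $w_{\cX}$ sector by sector over $\GCov D$---is indeed the paper's starting point (Section \ref{sec:Untwisting:-A-justification}, especially Conjecture \ref{conj:key-lemma}). But the core technical device that makes this tractable is missing from your account: the paper introduces the \emph{untwisting map} $u\colon V_{E}\to V_{E}$ defined by the $B$-linear transform $t^{l}Q^{-1}$, which gives a bijection $u_{\infty}\colon J_{\infty}^{G,E}V\to J_{\infty}^{\ge l}V$ onto ordinary (non-twisted) arcs of bounded order. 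The weight is then extracted not by computing $\ord\Jac_{\phi}$ directly, but by factoring $\psi$ through the triangle $\psi=\psi'\circ u$ (with $\psi'=\psi\circ u^{-1}\colon V\dasharrow X$ defined over $D$), so that $\ord\Jac_{\psi'}=\ord\Jac_{\psi}+\ord\Jac_{u^{-1}}$ and $\dim\psi_{n}=\dim\psi'_{n}+\dim u_{n}$. It is $\ord\Jac_{u^{-1}}$---not $\ord\Jac_{\phi}$---that equals $\frac{1}{\sharp G}\length B/(\det Q)-dl$ and is constant on the $E$-fiber. Your ``principal obstacle,'' the claimed identity $\ord\Jac_{\phi}=\frac{1}{\sharp G}\length B/(\det Q)$, is actually a misidentification: $\ord\Jac_{\phi}$ varies with the twisted arc $\gamma$ within a fixed $E$-fiber (it is supported on the branch locus of the quotient map and depends on the contact order of $\gamma$ with it), which is exactly why it cannot coincide with the $E$-only quantity $\frac{1}{\sharp G}\length B/(\det Q)$.

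The consequence is that your proposal never actually establishes the content of the conjecture, namely that $w_{\cX}$ is constant on each $E$-fiber. In the paper the $\gamma$-dependence of $\ord\Jac_{\psi}$ cancels out of $w_{\cX}(\gamma)=-\ord\Jac_{u^{-1}}-c$ (where $c=\dim V_{0}^{G'}+d(l-1)$), leaving exactly $w_{V}(E)=d-\frac{1}{\sharp G}\length B/(\det Q)-\dim V_{0}^{G'}$; this cancellation is invisible in your argument. Your observation that $\mu_{\cX}(J_{\infty}^{G,E}V)=[V_{0}^{G'}/N_{G}(G')]$ is correct and a useful sanity check, but by itself it only pins down one summand of $w_{V}$ after the untwisting computation is already in hand---it is not an independent derivation of that term. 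To repair the proposal, replace the attempted direct computation of $\ord\Jac_{\phi}$ with the untwisting diagram, and track the two constant contributions ($\dim u_{n}$ and $\ord\Jac_{u^{-1}}$) through the jet-level correspondence.
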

We will show an evidence for the conjecture in the next section.
\begin{example}
Let $\zeta\in k$ be a primitive $m$-th root of 1 with $\characteristic k\nmid m$
and $G:=\left\langle \diag(\zeta^{a_{1}},\dots,\zeta^{a_{d}})\right\rangle \subset GL_{d}(k)$
with $0\le a_{i}<m$ and $\gcd(a_{1},\dots,a_{d})$ prime to $m.$
Then $G$ is cyclic of order $m.$ Let 
\[
E=\Spec k[[t^{1/m}]]\to D=\Spec k[[t]]
\]
be a $G$-cover with $\sigma(t^{1/m})=\zeta t^{1/m}.$ Then for a
natural basis of $\Xi$, the matrix in Definition \ref{def--weight}
becomes 
\[
Q=\diag(t^{a_{1}/m},\dots,t^{a_{d}/m}).
\]
Hence 
\begin{align*}
w(E) & =d-\frac{1}{m}\sum_{i=1}^{d}a_{i}-\sharp\{i\mid a_{i}=0\}\\
 & =d-\frac{1}{m}\sum_{i=1}^{d}a_{i}',
\end{align*}
where $a_{i}'$ is an integer such that $a_{i}'\equiv a_{i}\mod m$
and $0<a_{i}'\le m.$ This agrees with weight functions in \cite{MR1905024,MR2027195,MR2271984}. 
\begin{example}
Suppose that $\characteristic k=p>0$ and $G=\left\langle \sigma\right\rangle $
is cyclic of order $p.$ Also suppose that $G$ acts on $k[x_{1},\dots,x_{d}]$,
$d\le p$ by 
\[
\sigma(x_{i})=\begin{cases}
x_{i}+x_{i+1} & (i<d)\\
x_{d} & (i=d).
\end{cases}
\]
Let 
\[
E=\Spec B\to D=\Spec k[[t]]
\]
be a $G$-cover with ramification jump $j>0$. Namely, if $s\in B$
is a uniformizer, then $v_{B}(\sigma(s)-s)=j+1.$ (Note that $j$
is necessarily prime to $p$.) Then there exist $f_{0},\dots,f_{d-1}\in B$
such that\end{example}
\begin{enumerate}
\item $v_{B}(f_{i})\equiv-ij\mod p$ and $0\le v_{B}(f_{i})<p,$
\item If we put $\delta=\sigma^{*}-\mathrm{id}_{B},$ then 
\[
v_{B}(\delta^{n}(f_{i}))=\begin{cases}
v_{B}(f_{i})+nj & (0\le n\le i)\\
\infty & (n>i),
\end{cases}
\]

\item $(\delta^{i}(f_{0}))_{1\le i\le d},\dots,(\delta^{i}(f_{d-1}))_{1\le i\le d}\in B^{d}$
form a basis of $\Xi_{B}.$
\end{enumerate}

The matrix $Q=(\delta^{i}(f_{j}))_{0\le i,j\le d-1}$ corresponding
to the basis is triangular and its determinant is $\prod_{i=0}^{d-1}\delta^{i}(f_{i}).$
Since 
\[
v_{B}(\delta^{i}(f_{i}))=v_{B}(f_{i})+ij=p\cdot\left\lceil \frac{ij}{p}\right\rceil ,
\]
we have
\begin{align*}
w(E) & =d-\sum_{i=0}^{d-1}\left\lceil \frac{ij}{p}\right\rceil -1\\
 & =-\sum_{i=1}^{d-1}\left\lfloor \frac{ij}{p}\right\rfloor .
\end{align*}
This agrees with the weight function in \cite{MR3230848}. 

\end{example}
These examples would suggest that the tame part positively contributes
to the weight and the purely wild part negatively does,\footnote{Afterwards this phenomenon was also confirmed in the case of permutation representations in \cite{Wood-Yasuda-I}.} and that the
weight function measures how tame/wild a $G$-cover $E\to D$ is.
The formula defining $w_{V}$ contains $\det Q$ and it is not clear
how to compute it in general. Therefore we would like to ask:
\begin{problem}
Does $w(E)$ depend only on numerical invariants of $E$ and $V$
as in the above examples? If it is the case, find a formula.\footnote{Afterwards it turned out that $w(E)$ is described by discriminants for permutation representations \cite{Wood-Yasuda-I}
and for little more complicated ones \cite{Yasuda:2014fk2}. However the problem is still open
for the general case.} 
\end{problem}
To attack the above problem, knowing properties of the weight function
would be helpful. For instance, it is natural to expect the following
properties: 
\begin{enumerate}
\item Let $E\to D$ be a $G$-cover, $E'\subset E$ a connected component,
$G'\subset G$ its stabilizer and $V'$ the same scheme as $V$ endowed
with the induced $G'$ action. Then $w_{V}(E)=w_{V'}(E').$
\item For a trivial $G$-cover $E=\bigsqcup_{g\in G}D\to D,$ $w_{V}(E)=0.$
\item For $G$-representations $V$ and $W$ over $D,$ let $G$ act on
$V\times_{D}W$ diagonally. Then $w_{V\times_{D}W}(E)=w_{V}(E)+w_{W}(E).$
\end{enumerate}

\section{\label{sec:Untwisting:-A-justification}Untwisting: a justification
of conjectures}

In this section, we will try to justify Conjectures \ref{conj--main}
and \ref{conj: weight-linear}. Here we will use a technique which
we call \emph{untwisting}. This would reduce the change of variables
formula for stacks to the one for varieties, which has been already
mostly established. Such an argument was already used by Denef and
Loeser \cite{MR1905024}. Our argument will be somehow more involved
than theirs. One of the reasons is that we have to untwist in the
opposite direction. It is inevitable because the weight function may
take negative values in the wild case.\footnote{These two sentences were wrong. This misunderstanding caused the complicated presentation of the untwisting. However
this was the way how the author reached the definition of weight functions and Conjecture \ref{conj:key-lemma} below. In \cite{Yasuda:2014fk2} he revisited this technique in a more intrinsic way.}

We will only consider the proper birational morphism
\[
\phi:\cX=[M/G]\to X=M/G,
\]
associated to a $G$-variety $M,$ as all keys seem to appear already
in this situation. Then, like the change of variables formula in known
cases, Conjecture \ref{conj--main} would basically follow from:
\begin{conjecture}[Key lemma]
\label{conj:key-lemma}Let $\gamma:\cE\to\cX$ be a twisted arc of
$\cX$ which sends the generic point of $\cE$ into the isomorphism
locus of $\phi.$ Let $\phi_{n}:\pi_{n}(\cJ_{\infty}\cX)\to J_{n}X$
be the natural map. Then for $n\gg0,$ 
\[
[\phi_{n}^{-1}(\phi_{n}(\pi_{n}(\gamma)))]=\LL^{\ord\Jac_{\phi}(\gamma)-w_{\cX}}.
\]

\end{conjecture}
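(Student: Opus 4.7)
The plan is to reduce the key lemma to the classical Denef--Loeser--Sebag change of variables formula via the untwisting technique. First, using the isomorphism $\cJ_\infty \cX \cong J_\infty^G M$ and étale localization on $M$, I would reduce to the linear case $\cX = [V/G]$ and work within the component $J_\infty^{G,E} V \cong \Xi^V_{B/A}$ attached to a fixed connected $G$-cover $E = \Spec B$.

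The untwisting is then set up as follows. Fix an $A$-basis $\alpha_1, \ldots, \alpha_d$ of $\Xi^V_{B/A}$ as in Definition~\ref{def--weight} and introduce the auxiliary $D$-variety $\hat V := \AA_D^d = \Spec A[y_1, \ldots, y_d]$, together with the $D$-morphism $\hat\phi : \hat V \to X$ obtained by evaluating the $G$-invariants of $V$ along the universal $G$-arc $\sum_j y_j \alpha_j \in (A[y_1, \ldots, y_d] \otimes_A B)^d$. The basis provides an $A$-module isomorphism $A^d \xrightarrow{\sim} \Xi^V_{B/A}$ identifying $J_\infty \hat V$ with $J_\infty^{G,E} V$ and intertwining $\hat\phi_\infty$ with $\phi_\infty$, though the truncation filtrations on the two sides are shifted by the ramification of $E/D$. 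Since $\hat\phi$ is a generically étale morphism of $d$-dimensional $D$-varieties, the established change of variables formula gives, for $n \gg 0$,
\[
[\hat\phi_n^{-1}(\hat\phi_n(\hat\pi_n(\hat\gamma)))] = \LL^{\ord \Jac_{\hat\phi}(\hat\gamma)},
\]
where $\hat\gamma$ is the arc in $\hat V$ corresponding to $\gamma$ under the untwisting.

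The remaining task, and the main obstacle, is to translate this identity back to the twisted side, accounting for two corrections. The first, arising from the transition matrix $Q = (\alpha_{ij})$ that compares $\phi^*\Omega_{X/D}^d \subset \Omega_{V/D}^d$ with $\hat\phi^*\Omega_{X/D}^d \subset \Omega_{\hat V/D}^d$, yields
\[
\ord \Jac_{\hat\phi}(\hat\gamma) - \ord \Jac_\phi(\gamma) = \frac{1}{\sharp G}\length\frac{B^d}{B \cdot \Xi^V_{B/A}}.
\]
The second, arising from the dimensional mismatch $\dim \pi_n(J_\infty^{G,E} V) = dn + \dim V_0^{G'}$ versus $\dim J_n \hat V = d(n+1)$, shows that the untwisted fiber is an $\LL^{d - \dim V_0^{G'}}$-bundle over the twisted one. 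Combining both, one obtains
\[
[\phi_n^{-1}(\phi_n(\pi_n(\gamma)))] = \LL^{\ord \Jac_{\hat\phi}(\hat\gamma)} \cdot \LL^{\dim V_0^{G'} - d} = \LL^{\ord \Jac_\phi(\gamma) - w_\cX(\gamma)}
\]
by the explicit formula of Definition~\ref{def--weight}. Carrying out these two comparisons rigorously in the wild case is delicate: $\det Q$ may vanish to arbitrarily high order, so $\hat\phi$ is generally ramified in codimension one and one must invoke the classical formula with the $\QQ$-valued Jacobian orders supplied by $\hat\cM$; moreover, the contributions of ramification in $E/D$ must be kept carefully separate from those of $\phi$ itself, and the shift of truncation filtrations must be reconciled via a Greenberg-type descent argument.
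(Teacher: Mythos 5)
Your proposal takes essentially the same untwisting approach as the paper's own (heuristic) justification, with a slightly different but equivalent bookkeeping. Where the paper introduces the self-map $u = t^{l}Q^{-1}$ of $V_{E}$ and the \emph{rational} map $\psi' = \psi\circ u^{-1}:V\dashrightarrow X$, you introduce an auxiliary affine space $\hat V=\AA_D^d$ with an honest $D$-\emph{morphism} $\hat\phi:\hat V\to X$; these agree up to the rescaling $y\mapsto t^{l}y$ (which identifies $J_\infty\hat V$ with the paper's $J_\infty^{\ge l}V$), and your two correction terms correspond precisely to the paper's $\ord\Jac_{u^{-1}}=\frac{1}{\sharp G}\length\frac{B}{(\det Q)}-dl$ and $c=\dim u_n=\dim V_0^{G'}+d(l-1)$, whose combination recovers $w_\cX$ from Definition~\ref{def--weight}. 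Working with a morphism rather than a correspondence is a genuine simplification of the presentation.

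There is, however, one point you pass over that is worth flagging even at the heuristic level. The morphism $\hat\phi$ (like the paper's $\psi'$) is generically finite of degree $\sharp G$, not birational, so the naive fiber statement $[\hat\phi_n^{-1}(\hat\phi_n(\hat\pi_n(\hat\gamma)))]=\LL^{\ord\Jac_{\hat\phi}(\hat\gamma)}$ is off by a multiplicative factor equal to the number of preimage branches: already for $G=\ZZ/2$ acting on $\AA_D^1$ by $x\mapsto-x$ with $E$ the ramified double cover, $\hat\phi^*(z)=ty^2$ and the truncated fiber over $\hat\gamma(y)=1$ has class $2\LL$, not $\LL^1$. Correspondingly, the map ``$J_\infty\hat V\cong J_\infty^{G,E}V$'' is not a bijection but a quotient by the automorphism group of the $G$-cover (the paper itself builds this into $\pi_n(J_\infty^{G,E}V)\cong\AA^{dn}\times V_0^{G'}/N_G(G')$), so your second correction should be a quotient-times-bundle, not merely an $\LL^{d-\dim V_0^{G'}}$-bundle. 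These two overcountings cancel, which is why your final identity is right; but the cancellation should be made explicit rather than left implicit. The paper avoids the issue by only equating \emph{relative dimensions} ($\dim\psi_n=\ord\Jac_\psi-w_\cX$), which do not see the constant multiplicity, and therefore stops short of the full fiber-class equality that you are aiming to prove. Your attempt at the stronger statement is reasonable, but the degree/automorphism bookkeeping is exactly the point where extra care is required, and it belongs alongside the two corrections you do list.
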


\subsection{Fixing a $G$-cover of $D$}
\begin{lem}
Let $\gamma,\gamma'\in J_{\infty}^{G}M$ be $G$-arcs sending the
generic points into the \'{e}tale locus of the quotient map $M\to X$.
If they have the same image in $J_{n}X$ for $n\gg0,$ then they have
the same image in $\GCov D.$ \end{lem}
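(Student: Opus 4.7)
The plan is to reduce the lemma to a Krasner-type continuity statement for étale $K$-algebras. Composing $\gamma,\gamma'$ with the quotient $\phi:M\to X$ yields arcs $\alpha,\alpha'\in J_{\infty}X$ that agree in $J_{n}X$ by hypothesis, and any $G$-cover of $D$ is determined by its restriction to $D^{*}$ via normalization in $D$. Since $\gamma$ sends the generic point of $E$ into the étale locus of $\phi$, the natural $G$-equivariant map $E|_{D^{*}}\to(M\times_{X}D)|_{D^{*}}$ is an isomorphism of étale $G$-torsors, so $E|_{D^{*}}$ is canonically identified with the pullback along $\alpha|_{D^{*}}$ of the étale $G$-torsor $\phi^{-1}(U)\to U$, where $U\subset X$ is the étale locus of $\phi$; similarly for $E'|_{D^{*}}$. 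It therefore suffices to show that for $n\gg 0$ these two pullback torsors are isomorphic on $D^{*}$.

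Next, I would make these torsors explicit by choosing a primitive element. Shrinking $U$ to an affine open $V$ containing $\alpha(\eta)$, pick $\theta\in\cO(\phi^{-1}(V))$ such that $\phi^{-1}(V)\to V$ is cut out by the minimal polynomial $f(T)\in\cO(V)[T]$ of $\theta$, of degree $\sharp G$; the $G$-action on $\phi^{-1}(V)$ is then determined by a fixed permutation $\sigma$ of the formal roots of $f$. For any arc $\beta:D\to X$ with $\beta(\eta)\in V$, the pullback $(\beta|_{D^{*}})^{*}(\phi^{-1}(V))$ is canonically isomorphic, as a $G$-equivariant étale $K$-algebra, to $K[T]/(\beta^{*}f)$ with $G$-action induced by $\sigma$.

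Finally, I would invoke Krasner's lemma: for the fixed separable polynomial $\alpha^{*}f\in K[T]$ there exists $N$ such that any monic $g\in K[T]$ with $v_{K}(g-\alpha^{*}f)\ge N$ coefficientwise satisfies $K[T]/(\alpha^{*}f)\cong K[T]/g$ via the unique $K$-algebra isomorphism matching roots by proximity in an algebraic closure of $K$. If $\alpha,\alpha'$ agree in $J_{n}X$ for $n$ large enough to dominate $N$ (plus the pole orders of the coefficients of $f$ at $\alpha(0)$, in case $\alpha(0)\notin V$), then $g:=(\alpha')^{*}f$ is an admissible choice, so $K[T]/(\alpha^{*}f)\cong K[T]/((\alpha')^{*}f)$. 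Because both $G$-actions are the common $\sigma$ and Krasner's pairing is unique, this isomorphism is $G$-equivariant, yielding an isomorphism of $G$-torsors on $D^{*}$, whence $E\cong E'$ in $\GCov D$.

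The main obstacle is the $G$-equivariance of the Krasner isomorphism; this is handled by the observation that $\alpha^{*}f$ and $(\alpha')^{*}f$ are specializations of the \emph{same} polynomial $f$, so $G$ permutes their formal roots by the same $\sigma$, and uniqueness of the nearest-root pairing forces compatibility with $\sigma$. A secondary technical nuisance is that when $\alpha(0)$ lies in the branch locus $X\setminus U$, the coefficients of $f$ may have poles at $\alpha(0)$ which inflate the $t$-adic distance between $\alpha^{*}f$ and $(\alpha')^{*}f$; this is precisely why the required $n$ must be chosen large enough depending on $\gamma$, rather than bounded uniformly in terms of $M$ and $X$.
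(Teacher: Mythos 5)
Your proof is correct and follows the same approach as the paper: choose a primitive-element presentation $S_{f}=R_{f}[x]/(h(x))$ of the quotient map on the \'{e}tale locus, pull back along the arc to get $\bar{h}(x)\in K[x]$ cutting out the function field of $E$, and observe that jets of sufficiently high order pin down the coefficients of $\bar{h}$ to high $t$-adic order. The paper leaves the final step --- that a separable polynomial determined to high order determines the corresponding extension and $G$-cover --- as an implicit invocation of Krasner's lemma, whereas you spell that out and additionally address the $G$-equivariance of the Krasner isomorphism, a point the paper's one-line conclusion ``Hence it determines $L$ and $E$'' glosses over.
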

\begin{proof}
Let $E\to D$ be the $G$-cover associated to $\gamma$. We consider
only the case where $E$ is connected. We may suppose that $M=\Spec S$
and $X=\Spec R$ with $R=S^{G}.$ Let $f\in R$ be such that $\gamma(E^{*})\subset\Spec S_{f}$
and we can write 
\[
S_{f}=R_{f}[x]/(h(x)).
\]
Let $\bar{h}(x)\in K[x]$ to be the image of $h(x)$ by $R_{f}[x]\to K[x]$
derived from $\gamma.$ Then the function field of $E$ is $L=K[x]/(\bar{h}(x)).$
For $n\gg0,$ the induced $n$-jet $f_{n}(\pi_{n}(\gamma))$ of $X$
determines the coefficients of $\bar{h}(x)$ up to sufficiently high
order. Hence it determines $L$ and $E$. 
\end{proof}

\subsection{The linear case}

Now we consider the case where $M=V=\Spec A[x_{1},\dots,x_{d}]$ with
a linear $G$-action. Fix $E=\Spec B\in\GCov D.$ As in Definition
\ref{def--weight}, we take an $A$-basis of $\Xi_{B/A}^{V}$,
\[
\alpha_{1}=(\alpha_{1j})_{1\le j\le d},\dots,\alpha_{d}=(\alpha_{dj})_{1\le j\le d}\in B^{d},
\]
which we now think of row vectors, 
and put 
\[
Q:= \begin{pmatrix}\alpha_{1}\\
\vdots\\
\alpha _{d}
\end{pmatrix}= (\alpha_{ij})_{1\le i,j\le d}\in M_{d}(B),
\]
which is invertible over $B\otimes_{A}K.$ 
\begin{defn}
Let $t$ be a uniformizer of $A$ and let $l\in\ZZ_{\ge0}$ be such
that $t^{l}Q^{-1}\in M_{d}(B).$ Then $t^{l}Q^{-1}$ defines a $B$-linear
transform on $B^{d}\cong\bigoplus B\cdot x_{j}$. Extending it, we
obtain a $B$-algebra endomorphism $u^{*}$ of $B[x_{1},\dots,x_{d}].$
Putting $V_{E}:=\Spec B[x_{1},\dots,x_{d}]=V\times_{D}E,$ we define
a $B$-linear map 
\[
u:V_{E}\to V_{E}
\]
to be the one corresponding to $u^{*}$. We call $u$ an \emph{untwisting
map. }
\end{defn}
Suppose that $G$ acts on $V_{E}=V\times_{D}E$ diagonally. A
$G$-arc $E\to V$ corresponds to a $G$-equivariant $E$-morphism
$E\to V_{E}.$ For $1\le i \le d$, let $\gamma_{i}:E\to V_{E}$ be the $G$-equivariant
$E$-morphism such that 
\begin{align*}
\gamma_{i}^{*}|_{\bigoplus B\cdot x_{j}}:\bigoplus B\cdot x_{j} & \to B\\
\sum a_{j}x_{j} & \mapsto \alpha_i\begin{pmatrix}a_{1}\\
\vdots\\
a_{d}
\end{pmatrix}.
\end{align*}
Then 
\begin{align*}
(u\circ\gamma_{i})^{*}|_{\bigoplus B\cdot x_{j}}:\bigoplus B\cdot x_{j} & \to B\\
\sum a_{j}x_{j} & \mapsto \alpha_i t^{l}Q^{-1}\begin{pmatrix}a_{1}\\
\vdots\\
a_{d}
\end{pmatrix} = t^l a_i.
\end{align*}
For any $\gamma\in J_{\infty}^{G,E}V,$ $\gamma^*|_{\bigoplus B\cdot x_{j}}$ is an $A$-linear combination of $\gamma_i^*|_{\bigoplus B\cdot x_{j}}$, and $(u\circ\gamma)^{*}|_{\bigoplus B\cdot x_{j}}$
is an $A$-linear combination of $(u\circ\gamma_{i})^{*}|_{\bigoplus B\cdot x_{j}}$.
For $0 \le n \le \infty$, we denote the set of $G$-equivariant morphisms $E_n \to V$ by $\dot{J}_n^{G,E}V$, distinguishing it from $J_n^{G,E}V=(\dot{J}_n^{G,E}V)/G$.
We have the identification $\dot J _n ^{G,E} V = \Xi ^V _{B/A,n}$
(see Lemma  \ref{lem:id} and its proof).

\begin{prop}
For $l\le n\le\infty,$ let $J_{n}^{\ge l}V$ be the subset of $n$-jets
$\gamma:D_{n}\to V$ of $V$ with $\gamma^{*}(x_{i})\in\fm_{A_{n}}^{l},$
where $\fm_{A_{n}}$ is the maximal ideal of $A_{n}.$ We have a natural
bijection
\begin{equation}
u_{\infty}:\dot J _{\infty}^{G,E}V\to J_{\infty}^{\ge l}V.\label{eq:bij}
\end{equation}
Moreover, for $l\le n < \infty ,$ the induced map 
\[
u_{n}:\pi_{n}(\dot J_{\infty}^{G,E}V)\to J_{n}^{\ge l}V
\]
is a trivial  $\AA_{k}^{c}$-fibration with $c=\dim V_{0}^{G'}+d(l-1).$\end{prop}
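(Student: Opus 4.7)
The plan is to use the identification $J_\infty^{G,E}V \cong \Xi_{B/A}^V$ from the preceding lemma, together with the free $A$-module structure of $\Xi_{B/A}^V$, to reduce both claims to linear algebra over $A$. Throughout, it suffices to treat the case where $E$ is connected; the general case then follows componentwise.

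First I would establish the bijectivity of (\ref{eq:bij}). An element of $J_\infty^{G,E}V$ corresponds to $\xi = Qa \in \Xi_{B/A}^V$ for a unique $a = (a_1,\ldots,a_d) \in A^d$. Extending the basis computation already performed in the excerpt by $A$-linearity gives $(u \circ \tilde\gamma)^{*}(x_k) = t^l a_k$; since each such expression lies in $t^l A \subset A$, the composite $E \to V_E \to V$ factors through the projection $E \to D$ and so defines an $A$-point $u_\infty(\gamma)$ of $V$ with $u_\infty(\gamma)^{*}(x_k) = t^l a_k \in \fm_A^l$. Under the identifications $J_\infty^{G,E}V \cong A^d$ and $J_\infty^{\ge l}V \cong (\fm_A^l)^d$, the map $u_\infty$ is just $a \mapsto t^l a$, which is manifestly a bijection.

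Next I would address the fibration claim. Set-theoretically, $\pi_n(J_\infty^{G,E}V) = \Xi_{B/A,n}^V$ is the image of $a \mapsto Qa$ in $B_n^d$, while $J_n^{\ge l}V$ is the quotient of $A^d$ by $t^{n-l+1}A^d$ via $a \mapsto t^l a \bmod t^{n+1}$. Both are finite-dimensional $k$-vector spaces, and $u_n$ is the $k$-linear map induced by the identity on $A^d$. To see it is well-defined, suppose $Qa \in s^{1+ne}B^d$ for some $a \in A^d$; multiplying by $t^l Q^{-1} \in M_d(B)$ gives $t^l a \in s^{1+ne}B^d$, and since $A \subset B$ is totally ramified of index $e = \sharp G$, the valuation comparison $v_B = e\cdot v_A$ on $A$ yields $A^d \cap s^{1+ne}B^d = t^{n+1}A^d$, so $a \in t^{n-l+1}A^d$ as required. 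Hence $u_n$ is a surjective $k$-linear map between affine spaces whose $k$-dimensions, by the preceding corollary and a direct count, are $dn + \dim V_0^{G'}$ and $d(n-l+1)$ respectively; its kernel has $k$-dimension $c = \dim V_0^{G'} + d(l-1)$, and any $k$-linear splitting produces the trivialization $\pi_n(J_\infty^{G,E}V) \cong J_n^{\ge l}V \times_k \AA_k^c$.

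The main obstacle, modest as it is, lies in the valuation identity $A^d \cap s^{1+ne}B^d = t^{n+1}A^d$, which rests on the total ramification of the connected $G$-cover $E/D$; once this is in hand, everything else reduces to linear algebra and to dimension formulas already extracted from the preceding corollary. The disconnected case is handled by applying the same analysis to each orbit of components and re-assembling via the $N_G(G')$-quotient supplied by the earlier lemma identifying $J_0^{G,E}M$ with $M_0^{G'}/N_G(G')$.
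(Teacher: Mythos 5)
Your proof is correct and follows essentially the same route as the paper's: use the free rank-$d$ $A$-module structure of $\Xi_{B/A}^V$ and the explicit description $u_\infty: a \mapsto t^l a$ to get bijectivity, then compare the dimension $dn+\dim V_0^{G'}$ of $\pi_n(J_\infty^{G,E}V)$ from the preceding corollary with the dimension $d(n-l+1)$ of $J_n^{\ge l}V$ to get the constant kernel dimension $c$. The one place you go beyond the paper's terse proof is the explicit verification that $u_n$ is well defined via the valuation identity $A^d\cap s^{1+ne}B^d=t^{n+1}A^d$ (using total ramification of the connected $G$-cover); this is a genuine and correct detail that the paper's ``the maps $u_n$ are linear surjections'' leaves implicit.
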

\begin{proof}
The first assertion is now obvious. The maps $u_{n}$ are linear surjections.
The dimensions of their kernels are independent of $n$ and equal
to

\begin{align*}
c & =\dim J_{l}^{G,E}V-\dim J_{l}^{\ge l}V\\
 & =\dim J_{0}^{G,E}V+d(l-1)\\
 & =\dim V_{0}^{G'}+d(l-1).
\end{align*}
Here $G'\subset G$ is the stabilizer of some connected component
of $E.$ 
\end{proof}
Since $u$ is an isomorphism outside the special fiber of $V_{E}\to D,$
there exists a rational map $\psi\circ u^{-1}:V_{E}\dashrightarrow X_{E}$. 
\begin{lem}
The rational map $\psi\circ u^{-1}$ is defined over $D$. \end{lem}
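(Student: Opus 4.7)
The plan is to prove that $(\psi \circ u^{-1})^{*}$ maps the subring $R = A[\bx]^{G}$ of $G$-invariant functions into $K[\bx]$, thereby exhibiting $\psi \circ u^{-1}$ as the base change to $E$ of a $D$-rational map $V \dashrightarrow X$. Since $\psi^{*}$ is the inclusion $R \otimes_{A} B \hookrightarrow B[\bx]$ and $(u^{-1})^{*}(x_{i}) = t^{-l}\xi_{i}$ for the linear forms $\xi_{i}$ whose coefficient vectors come from the columns of $Q$, it suffices to show that for each $f \in R$ the element $(u^{-1})^{*}(f) = f(t^{-l}\xi_{1}, \ldots, t^{-l}\xi_{d})$ lies in the $\delta$-invariants $(L[\bx])^{\delta} = K[\bx]$, where $L = \mathrm{Frac}(B)$.

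The heart of the argument is to read off the Galois transformation law of the $\xi_{i}$ from the defining property of $\Xi^{V}_{B/A}$. Writing $M_{g}$ for the matrix of $\rho(g)$ acting on $B^{d}$, the condition that each column of $Q$ lies in $\Xi^{V}_{B/A}$ is precisely the matrix identity $M_{g}Q = g(Q)$. A direct expansion then yields
\[
\delta(g)(\xi_{i}) \;=\; \sum_{j} g(Q_{ij})\,x_{j} \;=\; \sum_{j}(M_{g}Q)_{ij}\,x_{j} \;=\; \sum_{k}(M_{g})_{ik}\,\xi_{k},
\]
so the tuple $\vec{\xi}$ transforms under the Galois $\delta$-action on $L[\bx]$ by the same matrix $M_{g}$ by which $\vec{x}$ transforms under the linear $G$-action on $V$. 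For $f \in R$, $G$-invariance is the polynomial identity $f(M_{g}\vec{x}) = f(\vec{x})$; substituting $\vec{x} \leftarrow \vec{\xi}$ yields $f(M_{g}\vec{\xi}) = f(\vec{\xi})$, and hence
\[
\delta(g)\bigl((u^{-1})^{*}(f)\bigr) \;=\; f\bigl(t^{-l}\,\delta(g)(\vec{\xi})\bigr) \;=\; f(t^{-l} M_{g}\vec{\xi}) \;=\; f(t^{-l}\vec{\xi}) \;=\; (u^{-1})^{*}(f).
\]
Thus $(u^{-1})^{*}(f) \in K[\bx]$, and so $(\psi \circ u^{-1})^{*}$ restricts to an $A$-algebra map $R \to K[\bx]$, which is precisely the pullback on coordinate rings of the desired $D$-rational map $V \dashrightarrow X$.

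The one point requiring real care is the matrix bookkeeping: one must keep track consistently of how $\rho(g)$ is encoded as a matrix (left multiplication on column vectors of $B^{d}$ versus substitution in the variables $\bx$), since these conventions differ by a transpose, and the clean identity $\delta(g)(\vec{\xi}) = M_{g}\vec{\xi}$ only emerges when the convention used to define $\Xi^{V}_{B/A}$ matches the one used to extract the linear combination in terms of the $\xi_{k}$. Once this is pinned down, everything else is a purely formal consequence of the $G$-invariance of $f$, requiring no further geometric input from $V$ or $E$.
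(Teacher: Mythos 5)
Your approach is genuinely different from the paper's and, if the conventions line up, arguably cleaner. The paper argues by contradiction and by evaluation on arcs: if $\psi\circ u^{-1}$ were not defined over $D$, there would be an $f\in\mathrm{frac}(R)$ with $(u^{*})^{-1}(f)\notin K(\mathbf{x})$; but then for a general $\gamma\in J_{\infty}^{\ge l}V$ the $G$-arc $u_{\infty}^{-1}(\gamma):E\to V$ would induce an arc $D\to X$ whose value on $f$ simultaneously does and does not land in $K$. You instead give a direct Galois-descent computation: you read off a transformation law for the $\xi_{i}$ from the defining relation $M_{g}Q=\sigma_{g}(Q)$ and push it through the substitution. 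This is more explicit and avoids the arc-space machinery entirely, which is appealing.

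However, the step you flag as ``the one point requiring real care'' is a real issue, not merely cosmetic. Your chain needs $f(M_{g}\vec{\xi})=f(\vec{\xi})$, and you justify it by the $G$-invariance of $f$, i.e.\ $f(M_{g}\vec{x})=f(\vec{x})$, followed by substitution $\vec{x}\leftarrow\vec{\xi}$. But if $M_{g}$ is the matrix of $\rho(g)$ acting on column coefficient vectors of $\bigoplus B\cdot x_{i}$ (the same $M_{g}$ that enters $M_{g}Q=\sigma_{g}(Q)$), then the algebra automorphism it induces on $A[\mathbf{x}]$ sends $x_{i}\mapsto\sum_{j}(M_{g})_{ji}x_{j}$, so $G$-invariance reads $f(M_{g}^{T}\mathbf{x})=f(\mathbf{x})$, not $f(M_{g}\mathbf{x})=f(\mathbf{x})$. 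These are not interchangeable for a non-abelian or non-diagonalizable $G$, since $\{M_{g}\}$ and $\{M_{g}^{T}\}$ are generally different subgroups of $GL_{d}$. In the tame cyclic case (the paper's Example~6.7) the matrices are diagonal and everything commutes, so the discrepancy is invisible; but in the wild Jordan-block case (Example~6.8) it is not, and one sees that the paper's own basis of $\Xi$ in that example satisfies $M_{g}^{T}v=\sigma_{g}(v)$ rather than $M_{g}v=\sigma_{g}(v)$ under the natural reading of ``matrix of $\rho(g)$''. So you cannot simply assert that the conventions align; you must either fix the convention for $\Xi$ and for $u^{*}$ so that the same $M_{g}$ appears consistently on both sides, or replace the substitution step by the equivalent cocycle computation $\sigma_{g}(u^{*})^{-1}\circ u^{*}=\rho(g)$, from which $\psi_{E}\circ\sigma_{g}(u)^{-1}=\psi_{E}\circ u^{-1}$ and hence Galois-invariance of $\psi'$ follows without ever substituting into $f$. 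As it stands, the argument is morally right and certainly closer in spirit to ``what should be true'' than the paper's indirect proof, but the gap you acknowledge at the end is exactly where a careful reader would push back, and it is not a purely notational matter.
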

\begin{proof}
Suppose that this is not true. Then there exists $f\in\mathrm{frac}(A[x_{1},\dots,x_{d}]^{G})$
with $f\notin u^{*}(\mathrm{frac}(A[x_{1},\dots,x_{d}]).$ Then, for
general $\gamma\in J_{\infty}^{\le l}V,$ if $\gamma_{E}:E\to V_{E}$
is the morphism induced from $\gamma$ by extending scalars, then
$\gamma_{E}^{*}(f)\notin K.$ This contradicts the fact that $u_{\infty}^{-1}(\gamma):E\to V$
induces an arc $D\to X$.
\end{proof}
Let $\psi:V\to X$ be the quotient map and $\psi'=\psi\circ u^{-1}:V\dashrightarrow X$
the above rational map over $D.$ 

\begin{equation}
\xymatrix{ & V\ar[dl]_{\psi}\ar[dr]^{u\text{ (defined after extending scalars to \ensuremath{B})}}\\
X &  & V\ar@{-->}[ll]^{\psi'}
}
\label{eq:diagram1}
\end{equation}
This diagram induces the commutative diagram of jet spaces:
\begin{equation}
\xymatrix{ & \pi_{n}(\dot J_{\infty}^{G,E}V)\ar[dl]_{\psi_{n}}\ar[dr]^{u_{n}}\\
\pi_{n}(J_{\infty}X) &  & \pi_{n}(J_{\infty}^{\ge l}V)\ar@{-->}[ll]^{\psi'_{n}}
}
\label{eq:diagram2}
\end{equation}
Here $\psi_{n}'$ should be understood as the restriction of a ``correspondence''
\[
\xymatrix{ & \pi_{n}(\dot J^{G,E}_{\infty}W)\ar[dl]\ar[dr]\\
\pi_{n}(J_{\infty}\bar{X}) &  & \pi_{n}(J_{\infty}^{\ge l}\bar{V})
}
\]
where $\bar{X}$ and $\bar{V}$ are compactifications of $X$ and
$V$ respectively, and $W$ is a resolution of indeterminacies of
$\psi':\bar{V}\dasharrow\bar{X}$. 

We now fix a $G$-arc $\gamma\in \dot J_{\infty}^{G,E}E$ not contained
in a bad locus. We denote the relative dimensions of morphisms $\psi_{n}$
and $u_{n}$ at jets derived from $\gamma$ by $\dim\psi_{n}$ and
$\dim u_{n}$ respectively. Also we write the ``relative dimension
of $\psi_{n}'$'' as $\dim\psi'_{n}$. Then for $n\gg0,$
\[
\dim\psi_{n}=\dim\psi_{n}'+\dim u_{n}=\dim\psi_{n}'+c,
\]
with $c$ as above. 
On the other hand, we would have the associativity of Jacobian orders,
\[
\ord\Jac_{\psi'}=\ord\Jac_{\psi}+\ord\Jac_{u^{-1}},
\]
where $\ord\Jac_{?}$ denotes the Jacobian order of $?$ at the arc
derived from $\gamma$. Since $\Jac_{u^{-1}}=\det(t^{-l}Q)=t^{-dl}\cdot\det Q$
and $\det Q\in B,$ we would have
\[
\ord\Jac_{u^{-1}}\equiv\frac{1}{\sharp G}\length\frac{B}{(\det Q)}-dl.
\]
On the other hand, suitably generalizing the change of variables formula
to our rational map, we would have 
\[
\ord\Jac_{\psi'}=\dim\psi'_{n}.
\]
Hence
\begin{align*}
\dim\psi_{n} & =\dim\psi_{n}'+c\\
 & =\ord\Jac_{\psi}-w_{\cX}(\gamma).
\end{align*}
These arguments would justify Conjecture \ref{conj:key-lemma} and
hence Conjectures \ref{conj--main} and \ref{conj: weight-linear}
in this situation.

\subsection{Untwisting: the non-linear or singular case}

Next we consider the case where the $G$-variety $M$ is just an affine
(possibly singular) $D$-variety.\footnote{Afterwards this case was  studied in \cite{Yasuda:2014fk2} in more details.} There exists an equivariant closed
embedding $M\hookrightarrow V=\AA_{D}^d$ with $G$ linearly acting
on $V.$ From $V,$ we construct the same diagram as (\ref{eq:diagram1}).
Let $Y:=M/G$ and $\bar{M}\subset V$ the closure of $(\psi')^{-1}(Y).$
 The bijection
(\ref{eq:bij}) restricts to a bijection 
\[
\dot J_{\infty}^{G,E}M\to J_{\infty}^{\ge l}\bar{M}:=J_{\infty}^{\ge l}V\cap J_{\infty}\bar{M}.
\]
Diagram (\ref{eq:diagram2}) restricts to:

\begin{equation*}
\xymatrix{ & \pi_{n}(\dot J_{\infty}^{G,E}M)\ar[dl]_{\overline{\psi_{n}}}\ar[dr]^{\overline{u_{n}}}\\
\pi_{n}(J_{\infty}Y) &  & \pi_{n}(J_{\infty}^{\ge l}\bar{M})\ar@{-->}[ll]^{\overline{\psi'_{n}}}
}
\end{equation*}
Here the overlines mean restrictions of maps. With similar notation
as above, we would have 
\begin{align*}
\dim\overline{\psi_{n}} & =\dim\overline{\psi_{n}'}+\dim\overline{u_{n}}\\
 & =\ord\Jac_{\overline{\psi}}+\dim\overline{u_{n}}+\ord\Jac_{\overline{u^{-1}}}.
\end{align*}
Therefore the weight of the relevant arc, $w_{[M/G]}(\gamma),$ would
be 
\[
-\dim\overline{u_{n}}-\ord\Jac_{\overline{u^{-1}}}.
\]
Unlike the linear case, the two terms are not probably be constant
even if we fix a $G$-cover $E$. 
\begin{rem}
Another possible (and more direct) approach to the conjectures is
to generalize Looijenga's argument \cite{MR1886763}: we identify
fibers of $\psi_{n}$ with suitable submodules of
\[
\Hom[B]{\tilde{\gamma}^{*}\Omega_{\cY/\cX}}{\fm_{B}^{a}/\fm_{B}^{b}}
\]
and compute their dimensions. 
\end{rem}

\section{Stringy motifs vs.\ motivic masses of $G$-covers}

\subsection{Stringy motifs}

Let $\cX$ be a normal $\QQ$-Gorenstein DM stack over $D.$ Namely for some
$r>0,$ the double dual $\omega_{\cX/D}^{[r]}:=(\omega_{\cX/D}^{\otimes r})^{\vee\vee}$
of $\omega_{\cX/D}^{\otimes r}$ is invertible. Then we define a function
$F_{\cX}$ on $\cJ_{\infty}\cX$ as the order function of the submodule
\[
(\Omega_{\cX/D}^{d})^{\otimes r}/\tors\subset\omega_{\cX/D}^{[r]}
\]
divided by $r.$
\begin{defn}
Let $W\subset\cX\times _D D_0$ be a closed subset and $(\cJ_{\infty}\cX)_{W}$ the
preimage of $W$ by the natural map $\cJ_{\infty}\cX\to\cX\times _D D_0.$ We define
the \emph{stringy motif }of $\cX$ along $W$ as the integral 
\[
M_{\st}(\cX)_{W}:=\int_{(\cJ_{\infty}\cX)_{W}}\LL^{F_{\cX}+w_{\cX}}d\mu_{\cX}\in\hat{\cM}\cup\{\infty\}.
\]
We say that $\cX$ is \emph{stringily log terminal along $W$ }if
$M_{\st}(\cX)_{W}\ne\infty.$ If $W=\cX,$ we just write $M_{\st}(\cX).$ 
For a stack $\cX$ defined over $k$ and a closed subset $W\subset \cX$, 
we define $M_{\st}(\cX)_{W}$ as $M_{\st}(\cX\otimes _k k[[t]])_{W}$
\end{defn}
Since $F_{\cX}$ comes from singularities of $\cX$ and $\omega_{\cX}$
from local actions of stabilizers, roughly $M_{\st}(\cX)_{W}$ measures
how $\cX$ is far from a smooth variety along $W.$ 
\begin{rem}
We may generalize the above definition to log stacks, that is, pairs
of a DM stack and a $\QQ$-divisor on it. We will not enter this issue
in this paper.
\begin{rem}
A $\QQ$-Gorenstein variety $X$ is log terminal in the usual sense
if it is stringily log terminal. The converse holds if the variety
admits a log resolution. $ $ If $\cX$ is tame and smooth, then $\cX$
is stringily log terminal. In general, a smooth DM stack may not be
stringily log terminal (see \cite{MR3230848}). 
\end{rem}
\end{rem}
We can compute stringy invariants using resolution data if a nice
resolution exists. 
\begin{prop}
\label{prop:explicit-formula}Let $f:Y\to X$ be a resolution of a
log terminal variety $X$ over $k$ such that the relative canonical divisor $K_{f}$ is simple normal crossing,
say written as $K_{f}=\sum_{i\in I}a_{i}E_{i}.$ For a subset $J\subset I$,
we set $E_{J}^{\circ}:=\bigcap_{i\in J}E_{i}\setminus\bigcup_{i\notin J}E_{i}.$
Then 
\[
M_{\st}(X)_{W}=\sum_{J\subset I}[E_{J}^{\circ}\cap f^{-1}(W)]\prod_{j\in J}\frac{\LL-1}{\LL^{a_{i}+1}-1}.
\]
In particular, if $f$ is crepant, that is, $K_{f}=0$, then $M_{\st}(X)_{W}=[f^{-1}(W)].$\end{prop}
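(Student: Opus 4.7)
The plan is to obtain the formula by applying Conjecture \ref{conj--main} to the log resolution $f\colon Y\to X$, reducing the stringy motif to a motivic integral on the smooth variety $Y$, and then evaluating that integral by the standard stratification along the SNC divisor $\sum E_i$.

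First I apply Conjecture \ref{conj--main} with $\cY=Y$, $\cX=X$, $C=(\cJ_\infty X)_W$, and the measurable function $F:=F_X$. Since both $X$ and $Y$ are schemes, $w_X\equiv 0$ and $w_Y\equiv 0$, so
\[
M_{\st}(X)_W \;=\; \int_{(\cJ_\infty X)_W}\LL^{F_X}\,d\mu_X \;=\; \int_{f_\infty^{-1}((\cJ_\infty X)_W)}\LL^{F_X\circ f_\infty-\ord\Jac_f}\,d\mu_Y.
\]
By the almost bijectivity lemma and $\dim X_\sing<d$, the domain of the right-hand integral differs from $(J_\infty Y)_{f^{-1}(W)}$ only on a null set, so I may integrate over the latter.

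Second, I identify the integrand on $Y$. From $K_Y=f^*K_X+K_f=f^*K_X+\sum_i a_iE_i$ and the fact that $Y$ is smooth (so $\omega_Y=\Omega^d_{Y/D}$ is invertible), a sheaf-level computation using
\[
f^*\omega_X^{[r]}\;\cong\;\omega_Y^{\otimes r}\otimes\cO_Y(-rK_f)
\]
together with $\Jac_f\colon f^*\Omega^d_{X/D}\to\Omega^d_{Y/D}$ yields, for any arc $\gamma\in J_\infty Y$ not contained in $f^{-1}(X_\sing)$,
\[
(F_X\circ f_\infty-\ord\Jac_f)(\gamma)\;=\;-\sum_{i\in I}a_i\,\ord_{E_i}(\gamma),
\]
where $\ord_{E_i}(\gamma)$ is the order of contact of $\gamma$ with $E_i$. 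The log-terminal hypothesis forces $a_i>-1$ for all $i$, which is precisely what will make the geometric series below converge.

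Third, I stratify $(J_\infty Y)_{f^{-1}(W)}$ by the combinatorial type of contact with $\sum E_i$. For each subset $J\subset I$ and each tuple $(n_j)_{j\in J}\in\ZZ_{\geq1}^J$, set
\[
A_{J,(n_j)}\;:=\;\bigl\{\gamma\in J_\infty Y\ \big|\ \gamma(0)\in E_J^\circ\cap f^{-1}(W),\ \ord_{E_j}(\gamma)=n_j\ \forall j\in J\bigr\}.
\]
This is a stable cylinder, and a direct local-coordinate computation (choosing coordinates $y_1,\dots,y_d$ near a point of $E_J^\circ$ in which each $E_j$, $j\in J$, is cut out by $y_j$) gives
\[
\mu_Y\bigl(A_{J,(n_j)}\bigr)\;=\;[E_J^\circ\cap f^{-1}(W)]\,(\LL-1)^{|J|}\,\LL^{-\sum_{j\in J}n_j}.
\]
Summing the geometric series $\sum_{n\geq1}\LL^{-(a_j+1)n}=(\LL^{a_j+1}-1)^{-1}$ over each $j\in J$ and then over $J\subset I$ yields the claimed identity. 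For the crepant case $K_f=0$, every factor is $1$, and since the $E_J^\circ$ partition $Y$, the sum collapses to $[f^{-1}(W)]$.

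The main obstacle is the identification in the second step: it is where the $\QQ$-Gorenstein hypothesis genuinely enters, and one must handle the torsion in $f^*\Omega^d_{X/D}$ along $X_\sing$ with care. Once that identification is in hand, the remainder is the classical Kontsevich--Denef--Loeser--Batyrev evaluation on the smooth variety $Y$ and involves only routine dimension counting and summation of geometric series.
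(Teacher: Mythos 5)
Your argument is the standard Kontsevich--Denef--Loeser--Batyrev proof, which is exactly what the paper has in mind: the paper gives no proof and simply points to Batyrev \cite{MR1672108} and Denef--Loeser \cite{MR1905024} in the remark that follows. One remark worth making: you invoke Conjecture \ref{conj--main}, but since here $\cX=X$ and $\cY=Y$ are schemes, the change of variables formula is not conjectural --- it is the established theorem of Denef--Loeser (characteristic zero) and Sebag (positive and mixed characteristic), which is precisely why this proposition can be stated unconditionally while everything else in the paper is conditional on the conjectures.
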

\begin{rem}
Originally stringy invariants were defined by Batyrev \cite{MR1672108}
using a formula as in Proposition \ref{prop:explicit-formula}. Denef
and Loeser \cite{MR1905024} found a way to express them as motivic
integrals over the arc space of the given singular variety. 
\begin{rem}
If Conjecture \ref{conj--main} holds, then we can show that the stringy
motif is invariant under $K$-equivalences. Moreover we can generalize
it to log stacks.
\end{rem}
\end{rem}

\subsection{Motivic masses}

We conjectured that $\GCov D$ has the tautological motivic measure
$\tau$. To a $G$-representation $V$ over $D,$ we have associated
the weight function $w_{V}$ on $\GCov D.$ 
\begin{defn}
The \emph{motivic mass }of $G$-covers of $D$ with respect to $V$
is
\[
\Mass{G,D,V}:=\int_{\GCov D}\LL^{w_{V}}d\tau 
= \sum _{r\in \QQ} [w_V^{-1}(r)]\LL ^r
\in\hat{\cM}\cup\{\infty\}.
\]
\end{defn}
\begin{rem}
The motivic mass is analogous to weighted counts of extensions of
a local field by Serre \cite{MR500361} and Bhargava \cite{MR2354798}.
Serre proved a mass formula for totally ramified field extensions
of a local field with finite residue field. Bhargava proved a similar
formula by allowing \'{e}tale extensions. Kedlaya's reinterpretation
\cite{MR2354797} of Bhargava's result in terms of Galois representation,
and Wood's subsequent work \cite{MR2411405} seem to be more directly
related to our motivic invariant. \end{rem}
\begin{prop}
\label{prop:stringy-vs-mass}We denote by 0 the origin of $V\times_D D_0$, and
its images in $X:=V/G$ and $\cX:=[V/G]$ as well. Suppose that Conjectures
\ref{conj--main} and \ref{conj: weight-linear} hold. Suppose also
that $G$ has no reflection, equivalently, the quotient map $V\to X$
is \'{e}tale in codimension one. Then 

\begin{equation}
M_{\st}(X)_{0}=M_{\st}(\cX)_{0}=\Mass{G,D,V}.\label{eq:stringy-mass}
\end{equation}
Moreover, if there exists a crepant resolution $f:Y\to X,$ then these
invariants are also equal to $[f^{-1}(0)].$\end{prop}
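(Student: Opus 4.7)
\emph{Plan of proof.} The claim consists of two main equalities and one bonus identification, which I would tackle in the order stated. The first equality $M_{\st}(X)_0 = M_{\st}(\cX)_0$ will fall out of the change-of-variables formula (Conjecture \ref{conj--main}) applied to the quotient morphism $\phi : \cX = [V/G] \to X = V/G$. The second, $M_{\st}(\cX)_0 = \Mass{G,D,V}$, will come from integrating along the fibers of the natural projection $\pi : \cJ_{\infty}\cX = J_{\infty}^{G}V \to \GCov{D}$ and using the explicit description of $\pi_{n}(J_{\infty}^{G,E}V)$ from the linear-case corollary. The bonus identification with $[f^{-1}(0)]$ is an immediate application of Proposition \ref{prop:explicit-formula}.

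For the first equality, note that $\phi$ is proper, and by the no-reflection hypothesis it restricts to an isomorphism on the open substack of $\cX$ where $G$ acts freely, whose complement has codimension at least two; the almost bijectivity lemma then identifies $\phi_{\infty}^{-1}((J_{\infty}X)_0)$ with $(\cJ_{\infty}\cX)_0$ modulo measure zero. Since $\cX$ is smooth over $D$, we have $\Omega_{\cX/D}^{d} = \omega_{\cX/D}$ and hence $F_{\cX} \equiv 0$. Also, no-reflection gives $\phi^{*}\omega_{X/D}^{[r]} = \omega_{\cX/D}^{\otimes r}$ (there is no ramification in codimension one), and unwinding the definitions of $\ord\Jac_{\phi}$ and $F_{X}$ yields the pointwise identity $\ord\Jac_{\phi}(\gamma) = F_{X}(\phi_{\infty}(\gamma))$ for twisted arcs $\gamma$. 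Applying Conjecture \ref{conj--main} with $F = F_{X}$ and $w_{X} \equiv 0$, and substituting, one gets
\[
M_{\st}(X)_0 = \int_{(\cJ_{\infty}\cX)_0} \LL^{F_{X}\circ\phi_{\infty} - \ord\Jac_{\phi} + w_{\cX}}\, d\mu_{\cX} = \int_{(\cJ_{\infty}\cX)_0} \LL^{w_{\cX}}\, d\mu_{\cX} = M_{\st}(\cX)_0.
\]

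For the second equality, Conjecture \ref{conj: weight-linear} gives $w_{\cX} = w_{V} \circ \pi$, and since $F_{\cX} \equiv 0$ the integrand for $M_{\st}(\cX)_0$ depends on $\gamma$ only through $\pi(\gamma) \in \GCov{D}$. Fix $E \in \GCov{D}$ with $G' \subset G$ the stabilizer of a connected component; the corollary in the linear case gives $\pi_{n}(J_{\infty}^{G,E}V) \cong \AA_{k}^{dn} \times_{k} (V_0^{G'}/N_{G}(G'))$ with truncations that are $\LL^{d}$-bundles. Since $0 \in V$ is $G$-fixed, its preimage in $V_0^{G'}/N_{G}(G')$ is a single reduced point, so the fiber $(J_{\infty}^{G,E}V)_0$ is stable with $\pi_{n}$-image $\AA_{k}^{dn}$, and therefore has measure $[\AA_{k}^{dn}]\LL^{-dn} = 1$. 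Granting the Fubini-type compatibility of $\mu_{\cX}$ with $\tau$ and the fiberwise measures, integration yields
\[
M_{\st}(\cX)_0 = \int_{\GCov{D}} \LL^{w_{V}(E)} \cdot 1 \, d\tau(E) = \Mass{G,D,V}.
\]
The final identification $= [f^{-1}(0)]$ under the existence of a crepant resolution is immediate from Proposition \ref{prop:explicit-formula} with $W = \{0\}$ and $K_{f} = 0$.

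The main obstacle is the pointwise identity $\ord\Jac_{\phi} = F_{X} \circ \phi_{\infty}$ in the potentially wild setting. In characteristic zero the no-reflection hypothesis forces $X$ to be Gorenstein with $\phi^{*}\omega_{X} = \omega_{\cX}$, and the identity is classical; in positive characteristic, however, $\phi$ may fail to be flat and the compatibility of reflexive $\omega$-sheaves under pull-back by wild quotients is delicate, so this step will likely require either a careful local analysis of $\omega_{\cX}$ versus $\phi^{*}\omega_{X}^{[r]}$ along twisted arcs or additional hypotheses. A secondary difficulty is the Fubini-style identity $d\mu_{\cX} = d\tau \otimes d\mu_{V,E}$ used in the second paragraph, which is expected in the conjectural framework but has not yet been set up rigorously.
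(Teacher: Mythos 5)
Your proof takes essentially the same route as the paper's, which is in fact only a few lines long: it asserts $F_{X}\circ\phi_{\infty}=\ord\Jac_{\phi}$ and then declares that the conjectures yield the chain $\int_{(J_{\infty}X)_{0}}\LL^{F_{X}}d\mu_{X}=\int_{(\cJ_{\infty}\cX)_{0}}\LL^{w_{\cX}}d\mu_{\cX}=\int_{\GCov D}\LL^{w_{V}}d\tau$. You have correctly unpacked both steps: the first equality is the change-of-variables formula for $\phi:\cX\to X$ combined with $w_{X}\equiv 0$, $F_{\cX}\equiv 0$ (smoothness of $\cX$), and the key identity $\ord\Jac_{\phi}=F_{X}\circ\phi_{\infty}$ (which follows from the no-reflection hypothesis via $\phi^{*}\omega_{X/D}^{[r]}\cong\omega_{\cX/D}^{\otimes r}$); the second is the factorization $w_{\cX}=w_{V}\circ\pi$ from Conjecture \ref{conj: weight-linear} plus a fiberwise computation showing each fiber $(J_{\infty}^{G,E}V)_{0}$ over $E\in\GCov D$ has measure $1$, which indeed follows from the corollary $\pi_{n}(J_{\infty}^{G,E}V)\cong\AA_{k}^{dn}\times_{k}(V_{0}^{G'}/N_{G}(G'))$. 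Your appeal to Proposition \ref{prop:explicit-formula} for the crepant-resolution case is also the intended one. The two caveats you raise (the compatibility of $\omega$-sheaves under wild quotients, and the Fubini-type decomposition $d\mu_{\cX}=d\tau\otimes d\mu_{V,E}$) are genuine gaps, but they are gaps in the paper too -- the paper is explicitly heuristic about them, so flagging them is appropriate rather than a defect. One small remark: you do not actually need the almost-bijectivity lemma to identify $\phi_{\infty}^{-1}((J_{\infty}X)_{0})$ with $(\cJ_{\infty}\cX)_{0}$; since $\phi^{-1}(0)$ is the single point $0\in\cX$, the two sets agree on the nose, not merely modulo measure zero.
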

\begin{proof}
Let $\phi:\cX\to X$ be the natural morphism. Then $F_{X}\circ\phi_{\infty}=\ord\Jac_{\phi}.$
Hence our conjectures show
\[
\int_{(J_{\infty}X)_{0}}\LL^{F_{X}}d\mu_{X}=\int_{(\cJ_{\infty}\cX)_{0}}\LL^{w_{\cX}}d\mu_{\cX}=\int_{\GCov D}\LL^{w_{V}}d\tau,
\]
and the proposition.\end{proof}
\begin{rem}
If $\characteristic k\nmid\sharp G,$ then $\GCov D$ have exactly
$\sharp\Conj G$ points and
\[
M_{\st}(X)_{0}=\Mass{G,D,V}=\sum_{[g]\in\Conj G}\LL^{\age g}.
\]
See (\ref{eq:age}). Hence, if $f:Y\to X$ is a crepant resolution
and if $\chi_{c}(-)$ denotes the topological Euler characteristic
(for compactly supported cohomology), then 
\[
\chi_{c}(f^{-1}(0))=\sharp\Conj G.
\]
Thus we recover a version of the McKay correspondence. The basically
same result was conjectured by Reid and first proved by Batyrev \cite{MR1677693}
in arbitrary dimension (for the historical account, see \cite{MR1886756}).
Our approach is essentially due to Denef and Loeser \cite{MR1905024}
at least in characteristic zero.
\begin{rem}
Without assuming any conjecture, the proposition above has been established
in \cite{MR3230848} when $G=\ZZ/p\ZZ$, $A=k[[t]]$ and $V$
is defined over $k$. Also, a variant of the proposition, regarded
as a point-counting realization, will be proved in a forthcoming paper \cite{Wood-Yasuda-I}\footnote{This paper is now published online.}
 when $G$ is the $n$-th symmetric group $S_{n}$,
$V$ is the direct sum of two copies of the standard representation
and $Y$ is the Hilbert scheme of $n$ points on the affine plane.
In the same paper, it will turn out that this case is closely related
to Bhargava's mass formula for \'{e}tale extensions.
\end{rem}
\end{rem}

\section{Equisingular families and uniformity of motivic masses}

Motivated by uniformity problems of Kedlaya \cite{MR2354797} and
Wood \cite{MR2411405}, we pose the following problem:
\begin{problem}
\label{prob--uniformity-mass}Given a family $V_{S}\to S$ of $G$-representations
over a scheme $S$ with fibers $V_{s}$, $s\in S,$ are $\Mass{G,D,V_{s}}$
independent of $s\in S$? What finite groups admit such a uniform
family of representations?
\end{problem}
Since $\hat{\cM}$ depends on $k,$ to make the problem precise,
we have to take a suitable realization of motivic masses. However,
in all known cases, they are rational functions in $\LL$, and hence
we do not have to worry about realization. We are mainly interested
in the case where $S$ surjects onto $\Spec\ZZ$ and what happens
around points of characteristic dividing $\sharp G$. 

Proposition \ref{prop:stringy-vs-mass} links the problem to: 
\begin{problem}
\label{prob--equisingular}What families of quotient singularities
are equisingular in the sense that they have the ``same'' stringy
motif?
\end{problem}
These problems are wide open. From now on, we will focus on $G$-representations
with $G$ of prime order. In this case, there have been already explicit
formulae for motivic masses and stringy motifs at least in equal characteristic.
We will now recall them. 

Let $G$ be a finite group of prime order $p$ and $V$ a $G$-representation
of dimension $d$ over $k.$ We first suppose that $p\ne\characteristic k.$
Fixing a primitive $p$-th root $\zeta\in k$ of 1, we write each
element $g\in G$ as $\diag(\zeta^{a_{1}},\dots,\zeta^{a_{d}})$,
$0\le a_{i}<r$ for a suitable basis of $V$, and put
\begin{equation}
\age g:=\frac{1}{r}\sum_{i=1}^{d}a_{i}.\label{eq:age}
\end{equation}
Then 
\[
\Mass{G,D,V}=\sum_{g\in G}\LL^{\age g}.
\]
If $V$ has no reflection, then this is equal to $M_{\st}(X)_{0}$
for the corresponding quotient singularity $0\in X.$

Next suppose that $p=\characteristic k.$ Then $V$ is the direct
sum of indecomposable representations of dimensions $d_{1},\dots,d_{l}$
with $\sum_{i}d_{i}=d.$ (Such a decomposition of $V$ is unique up
to permutation, and we have $d_{i}\le p.$) We put 
\[
D_{V}:=\sum_{i=1}^{l}\frac{d_{i}(d_{i}-1)}{2}.
\]
Then $\Mass{G,D,V}\ne\infty$ if and only if $D_{V}\ge p.$ When one
of the two equivalent conditions holds, then $V$ has no reflection
and we have 
\begin{equation}
\Mass{G,D,V}=M_{\st}(X)_{0}=1+\frac{(\LL-1)\left(\sum_{s=1}^{p-1}\LL^{s+w(s)}\right)}{\LL-\LL^{p-D_{V}}},\label{eq:stringy-p-cyclic}
\end{equation}
with 
\[
w(s)=-\sum_{i=1}^{l}\sum_{j=1}^{d_{i}-1}\left\lfloor js/p\right\rfloor .
\]
 The topological Euler characteristic realization of $\Mass{G,D,V}$
equals $p$ in the tame case and 
\[
1+\frac{p-1}{D_{V}-p+1}
\]
in the wild case. This shows the following:
\begin{prop}
If a family $V_{S}\to S\twoheadrightarrow\Spec\ZZ$ is uniform in
the sense of Problem \ref{prob--uniformity-mass}, then $D_{V_{p}}=p$
for every reduction $V_{p}$ of $V_{S}$ to characteristic $p.$ 
\end{prop}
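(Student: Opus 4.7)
The plan is to apply the topological Euler characteristic realization $\chi_{c}$ to the motivic masses at various fibers and to compare the values forced by uniformity, using the explicit formulae recalled just before the proposition.

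First, I would observe that uniformity of $\Mass{G,D,V_{s}}$ across $s\in S$ implies in particular that $\Mass{G,D,V_{s}}\ne\infty$ for \emph{every} $s$, since at a generic characteristic-zero point the mass is the finite quantity $\sum_{g\in G}\LL^{\age g}$. Combined with the condition stated above formula (\ref{eq:stringy-p-cyclic}), this already gives $D_{V_{p}}\ge p$ for every characteristic-$p$ reduction. The remaining content is to rule out $D_{V_{p}}>p$.

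Next, pass to the Euler characteristic realization. At a point $s\in S$ of residue characteristic different from $p$, the representation $V_{s}$ is tame, $\Mass{G,D,V_{s}}=\sum_{g\in G}\LL^{\age g}$, and hence
\[
\chi_{c}\bigl(\Mass{G,D,V_{s}}\bigr)=\sharp G=p.
\]
At a characteristic-$p$ point with reduction $V_{p}$, formula (\ref{eq:stringy-p-cyclic}) gives, after applying $\chi_{c}$ (i.e. specializing $\LL\mapsto 1$ in the rational function, which is legitimate because the pole of the denominator at $\LL=1$ is simple and is cancelled by the factor $\LL-1$ in the numerator), the value
\[
\chi_{c}\bigl(\Mass{G,D,V_{p}}\bigr)=1+\frac{p-1}{D_{V_{p}}-p+1},
\]
as stated in the excerpt.

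Uniformity of the family forces these two numbers to agree, giving
\[
p=1+\frac{p-1}{D_{V_{p}}-p+1},
\]
which simplifies to $D_{V_{p}}-p+1=1$, i.e. $D_{V_{p}}=p$. The main (essentially only) subtlety is the justification that $\chi_{c}$ is a well-defined invariant of the motivic mass taking the claimed values, so that equality of motivic masses in $\hat{\cM}$ descends to equality of these Euler characteristics; this is immediate once one accepts that the rational expressions of the excerpt lie in a subring on which $\LL\mapsto 1$ gives the topological Euler characteristic. No further calculation is required.
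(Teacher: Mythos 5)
Your proof is correct and takes essentially the same approach as the paper: the paper's justification consists exactly of the sentence preceding the proposition, which records that the topological Euler characteristic realization of the mass is $p$ in the tame case and $1+\frac{p-1}{D_{V}-p+1}$ in the wild case, and the proposition follows by comparing these under uniformity. Your extra remarks about finiteness (giving $D_{V_{p}}\ge p$ first) and the legitimacy of the specialization $\LL\mapsto1$ are sensible elaborations but do not change the argument.
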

The following example is a partial converse.
\begin{example}
Let $G$ be a finite group of prime order $p$ and $V_{p}$ a $G$-representation
in characteristic $p$. Then for a suitable basis, a generator $\sigma\in G$
is represented by a Jordan normal form $J$ with blocks
\[
J_{i}=\begin{pmatrix}1 & 1\\
 & 1 & 1\\
 &  & \ddots & \ddots\\
 &  &  & 1 & 1\\
 &  &  &  & 1
\end{pmatrix}
\]
of sizes $d_{i}\le p.$ Thus we may suppose that $V_{p}$ is defined
over $\FF_{p}$. Then we lift $J$ to the matrix $\tilde{J}$ over
$R:=\ZZ[x]/(x^{p}-1)$ with blocks
\[
\tilde{J_{i}}=\begin{pmatrix}1 & 1\\
 & x & 1\\
 &  & \ddots & \ddots\\
 &  &  & x^{d_{i}-2} & 1\\
 &  &  &  & x^{d_{i}-1}
\end{pmatrix}.
\]
For every $i,$ the reduction of $\tilde{J}_{i}$ to a characteristic
$\ne p$ has $d_{i}$ distinct eigenvalues all of which are $p$-th
roots of 1. Hence $\tilde{J_{i}}^{p}=1$ and $\tilde{J}^{p}=1.$ Therefore
$\tilde{J}$ induces a $G$-representation $V_{R}$ lifting $V_{p}$
over $R.$ 

Now suppose that $D_{V_{p}}=\sum_{i}\frac{d_{i}(d_{i}-1)}{2}=p.$
Then 
\[
\Mass{G,D,V_{p}}=1+\sum_{s=1}^{p-1}\LL^{s+w_{V_{p}}(s)},
\]
while for a reduction $V_{l}$ of $V_{R}$ to characteristic $l\ne p$,
\[
\Mass{G,D,V_{l}}=1+\sum_{s=1}^{p-1}\LL^{\alpha(s)}.
\]
Here $\alpha(s)=\sum_{i=1}^{l}\sum_{j=1}^{d_{i}}\left\{ js/p\right\} $,
$\left\{ -\right\} $ denoting the fractional part. We now claim that
$\alpha(s)=s+w_{V_{p}}(s)$. Indeed,
\begin{align*}
s & =\frac{s\cdot D_{V_{p}}}{p}\\
 & =\sum_{i=1}^{l}\sum_{j=1}^{d_{i}-1}\frac{sj}{p}\\
 & =\sum_{i}\sum_{j}\left\lfloor \frac{sj}{p}\right\rfloor +\sum_{i}\sum_{j}\left\{ \frac{sj}{p}\right\} \\
 & =-w_{V_{p}}(s)+\alpha(s).
\end{align*}
Hence, thinking of $\LL$ as an indeterminate, the motivic mass $\Mass{G,D,V_{l}}$
is independent of the characteristic $l$. 
\begin{example}
In the preceding example, suppose that $V_{p}=W^{\oplus p}$ with
$W$ the unique indecomposable $G$-representation of dimension $2$
(in characteristic $p$). Then for any characteristic $l,$ 
\[
\Mass{G,D,V_{l}}=\sum_{i=0}^{p-1}\LL^{i}=[\PP^{p-1}].
\]

\end{example}
\end{example}

\section{Non-existence of resolution of singularities}

Stringy motifs of log terminal varieties have a lot of information
on their resolutions of singularities if exist. A detailed knowledge
of this invariant might lead to the non-existence of resolution of
singularities in positive characteristic. Conjecture \ref{conj--main}
enables us to reduce the stringy invariant of a quotient variety to
that of the corresponding quotient stack, which would be easier to
compute in some cases. For this purpose, we have to know what constraints
the existence of resolution imposes on the invariant.

\subsection{Rationality and the Poincar\'{e} duality}
\begin{defn}
For a $\QQ$-Gorenstein $k$-variety $X$ and a closed subset $W\subset X,$
we put 
\[
P_{\st}(X,\Delta)_{W}:=P(M_{\st}(X,\Delta)_{W})\in\bigcup_{r>0}\ZZ((T^{-1/r})).
\]
with $P$ the virtual Poincar\'{e} realization. If $W=X,$ we omit
the subscript $W.$ 
\begin{defn}
A rational function $f(T^{1/r})\in\ZZ(T^{1/r})$ in $T^{1/r}$ is
said to \emph{satisfy the $d$-dimensional Poincar\'{e} duality }if
\[
T^{2d}\cdot f(T^{-1/r})=f(T^{1/r}).
\]

\end{defn}
\end{defn}
\begin{prop}
Let $X$ be a $\QQ$-Gorenstein $k$-variety which is log terminal along
$W.$ Suppose that there exists a resolution $f:Y\to X$ such that
$K_{f}$ is simple normal crossing. Then $P_{\st}(X)_{W}$ is a rational
function. Moreover, if $X$ is proper and $W=X$, then $P_{\st}(X)$
satisfies the $d$-dimensional Poincar\'{e} duality with $d=\dim X.$\end{prop}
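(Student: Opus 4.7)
The plan is to push the explicit formula from Proposition \ref{prop:explicit-formula} through the virtual Poincar\'e realization and analyze the resulting expression. Setting $u_j := (T^{2}-1)/(T^{2(a_j+1)}-1)$, this yields
\[
P_{\st}(X)_{W} = \sum_{J \subset I} P(E_{J}^{\circ} \cap f^{-1}(W)) \prod_{j \in J} u_{j}.
\]
Since $X$ is $\QQ$-Gorenstein, the $a_{j}$ share a common denominator $r$, so each $u_{j}$ is a rational function in $T^{1/r}$; as $I$ is finite and each $P(E_{J}^{\circ} \cap f^{-1}(W))$ is a polynomial, rationality of $P_{\st}(X)_{W}$ follows immediately.

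For the Poincar\'e duality assertion, assume $W=X$ and $X$ proper, so that $Y$ is smooth and proper and each stratum $E_{K} := \bigcap_{k \in K} E_{k}$ is smooth, proper, of pure dimension $d-|K|$ by the SNC hypothesis. The key step is to reorganize the sum in a way that matches the duality. Using $[E_{J}^{\circ}] = \sum_{K \supset J}(-1)^{|K|-|J|}[E_{K}]$ together with the elementary identity $\sum_{J \subset K}(-1)^{|K|-|J|}\prod_{j \in J} u_{j} = \prod_{k \in K}(u_{k}-1)$, one obtains
\[
P_{\st}(X) = \sum_{K \subset I} P(E_{K}) \prod_{k \in K}(u_{k}-1).
\]
Classical Poincar\'e duality for $\ell$-adic cohomology of smooth proper varieties gives $P(E_{K}, T^{-1}) = T^{-2(d-|K|)} P(E_{K}, T)$, which is the second ingredient.

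The problem then reduces to the single rational-function identity
\[
T^{2}\bigl(u_{k}(T^{-1}) - 1\bigr) = u_{k}(T) - 1,
\]
which is a one-line algebraic check using $u_{k}(T^{-1}) = T^{2a_{k}} u_{k}(T)$. Multiplying $P_{\st}(X, T^{-1})$ by $T^{2d}$ and distributing one factor of $T^{2}$ into each of the $|K|$ brackets, the $T^{-2(d-|K|)}$ coming from duality for $E_{K}$ is exactly absorbed, and one reads off $T^{2d} P_{\st}(X, T^{-1}) = P_{\st}(X, T)$. There is no serious obstacle: rationality is immediate from the closed form, and the duality reduces to a combinatorial regrouping plus a one-line rational-function identity. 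The only mildly delicate point is spotting the correct regrouping that makes each factor individually $T \mapsto T^{-1}$-antisymmetric up to the prescribed degree shift; once it is written in the form $\sum_{K} P(E_{K}) \prod_{k \in K}(u_{k}-1)$, the rest is bookkeeping.
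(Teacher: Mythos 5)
Your proof is correct and is exactly the route the paper takes: the paper's own proof consists of two sentences, ``The rationality follows from Proposition \ref{prop:explicit-formula}. We can prove the Poincar\'e duality in the same way as Batyrev did,'' and what you have written out is precisely Batyrev's argument (regrouping the strata from $E_J^\circ$ to the closed $E_K$, applying Poincar\'e duality for the smooth proper $E_K$, and absorbing the shift via the identity $T^2(u_k(T^{-1})-1)=u_k(T)-1$). So this matches the paper's approach; you have simply supplied the details the paper delegates to the reference.
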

\begin{proof}
The rationality follows from Proposition \ref{prop:explicit-formula}.
We can prove the Poincar\'{e} duality in the same way as Batyrev
\cite{MR1672108} did.
\end{proof}

\subsection{Medusa singularities}

Next, let us recall the following result due to Kerz and Saito:
\begin{thm}[\cite{MR3119100}]
Let $M$ be a smooth $k$-variety with a finite group $G$ acting
on it, 
\[
\pi:M\to X=M/G
\]
the quotient map. Let $S\subset X$ be a closed subset containing
the singular locus such that the reduced preimage $T:=\pi^{-1}(X_{\sing})_{\red}$
is smooth. Suppose that a functorial resolution of singularities holds
over $k.$ Let $f:Y\to X$ be a resolution such that $E=f^{-1}(S)$
(with reduced structure) is a simple normal crossing divisor and $f$
is an isomorphism over $X\setminus X_{\sing}.$ (We will call such
$f$ a log resolution of $X$ and $S$.) Then the dual complex $\Gamma(E)$
of $E$ is contractible. 
\end{thm}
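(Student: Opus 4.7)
The plan is to use $G$-equivariant functorial resolution of singularities to reduce the statement to a combinatorial analysis of the dual complex of a $G$-equivariant log resolution of $M$ along $T$, then exploit the smoothness of $T$ to produce a deformation retract which descends to the quotient.

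First, I would apply functorial resolution $G$-equivariantly to construct a $G$-equivariant projective birational morphism $\phi : \tilde M \to M$ with $\tilde M$ smooth, $\tilde T := \phi^{-1}(T)_{\red}$ a $G$-invariant SNC divisor, and $\tilde M/G$ smooth. The composition yields a log resolution $f : Y := \tilde M/G \to X$ of $(X,S)$ with $E = \pi(\tilde T)$, where $\pi : \tilde M \to Y$ is the finite quotient map. By the topological invariance of the dual complex under different choices of log resolution (Thuillier, Payne), it suffices to prove contractibility of $\Gamma(E)$ for this specific $f$. Moreover, since $\pi$ is finite and carries irreducible components (and their intersections) of $\tilde T$ to those of $E$ compatibly, $\Gamma(E)$ is naturally identified with the simplicial $G$-quotient of $\Gamma(\tilde T)$.

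Second, I would analyze $\Gamma(\tilde T)$ inductively along the blowup sequence producing $\phi$. The first blowup, performed along the smooth $G$-invariant center $T \subset M$, contributes a $G$-invariant vertex $v_0$ corresponding to $\mathbb{P}(N_{T/M})$. Each subsequent blowup, performed along a smooth $G$-invariant subvariety contained in the then-current exceptional locus, contributes new vertices which attach to the existing simplicial complex through the stratum they are born on. The inductive claim is that at each stage of the resolution the resulting dual complex admits a $G$-equivariant deformation retraction onto $v_0$; descending through $\pi$ then yields a contraction of $\Gamma(E) = \Gamma(\tilde T)/G$ onto the single image vertex $\overline{v_0}$.

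The main obstacle is verifying this inductive claim: one must show that every exceptional component added after the first blowup meets (a strict transform of) a simplex linking back to $v_0$, and that the equivariant retraction can be chosen compatibly throughout the resolution process so as to descend cleanly to the $G$-quotient. The smoothness hypothesis on $T$ is essential here, for it ensures that the very first blowup center of the resolution algorithm is already smooth, preserving a cone-like anchoring at $v_0$; were $T$ singular, the algorithm would first desingularize $T$, possibly fragmenting the base of the dual complex. Functoriality of the resolution algorithm is crucial as well, in guaranteeing that the combinatorics of the blowup sequence are sufficiently $G$-equivariant for the descent to $\Gamma(E)$.
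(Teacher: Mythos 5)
The paper does not actually prove this theorem; it is stated as an external result, cited to Kerz and Saito, without any argument in the text. So there is no ``paper proof'' to compare against, and your attempt has to be evaluated on its own terms.

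On those terms, the proposal has a serious gap at the very first step. You ask for a $G$-equivariant projective birational morphism $\phi:\tilde M\to M$ with $\tilde M$ smooth, $\tilde T=\phi^{-1}(T)_{\red}$ an SNC divisor, \emph{and} $\tilde M/G$ smooth. Equivariant functorial resolution gives you the first two properties, but it does not give you the third: in general $G$ continues to act on $\tilde M$ with nontrivial stabilizers, and $\tilde M/G$ has quotient singularities. This is not a technicality that can be waved away --- making the quotient smooth (or making the action free in codimension one plus then resolving quotient singularities) is essentially the whole difficulty, and once you introduce a further resolution of $\tilde M/G$ the clean identification of $E$ with $\pi(\tilde T)$ breaks down. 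Related to this, the claim that $\Gamma(E)$ ``is naturally identified with the simplicial $G$-quotient of $\Gamma(\tilde T)$'' also does not follow: the image of an SNC divisor under a finite quotient map need not be SNC (components can be folded onto themselves, strata can be identified), so the quotient of the stratification need not even yield a simplicial complex in the naive sense. Finally, you yourself flag the ``inductive claim'' --- that every stage of the resolution admits a $G$-equivariant deformation retraction onto $v_0$ --- as the main obstacle, but you do not prove it; that is precisely the heart of the matter, and without it the argument does not close. The high-level idea of anchoring the dual complex to the vertex coming from the first blowup along the smooth center $T$, and exploiting homotopy invariance of the dual complex, is a sensible starting point, but as written the argument has not established any of the three steps that carry the actual weight.
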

Putting this conversely, if either
\begin{enumerate}
\item there exists a log resolution $f:Y\to X$ of $X$ and $S$ with $\Gamma(f^{-1}(S))$
\emph{not }contractible, or
\item $X$ and $S$ do not admit any log resolution $f:Y\to X$ of $X$
and $S$ with $\Gamma(f^{-1}(S))$ contractible, 
\end{enumerate}
then a functorial resolution over $k$ never holds. Saito calls such
singularities of $X$ \emph{Medusa singularities.} Since stringy motifs
have a lot of information on resolution, we might use them to find
Medusa singularities (if exist) and disprove the functorial resolution.
For this purpose, a key result is the following:
\begin{prop}
\label{prop:poincare-weight-0}Let $X$ be a $\QQ$-Gorenstein variety
which is stringily log terminal along a closed subset $S\subset X.$
Let $f:Y\to X$ be a log resolution of $X$ and $S$ such that $K_{f}$
is a simple normal crossing divisor supported in $E:=f^{-1}(S).$
Suppose that there exists a closed subset $R\subset S$ such that
$R$ is proper over $k$ and $F:=f^{-1}(R)$ is a normal crossing
variety having the same dual complex as $E.$ Then 
\[
\chi(\Gamma(E))=\chi(\Gamma(F))=P_{\st}(X)_{R}|_{T=0},
\]
where $\chi$ is the Euler characteristic.\end{prop}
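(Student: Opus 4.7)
The plan is to run the explicit formula of Proposition~\ref{prop:explicit-formula} through the virtual Poincaré realization $P$ and then specialize at $T=0$. Writing $K_{f}=\sum_{i\in I}a_{i}E_{i}$, that proposition gives
\[
M_{\st}(X)_{R}=\sum_{J\subseteq I}[E_{J}^{\circ}\cap F]\prod_{i\in J}\frac{\LL-1}{\LL^{a_{i}+1}-1}.
\]
Since $F\subseteq E$, the $J=\emptyset$ term vanishes, and for $J\neq\emptyset$ one has $E_{J}^{\circ}\cap F=F_{J}^{\circ}$, the open stratum of the normal crossing variety $F$ cut out by the closed pieces $F_{i}:=E_{i}\cap F$. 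Stringy log terminality forces $a_{i}>-1$, so $2(a_{i}+1)>0$ and each Jacobian factor $(T^{2}-1)/(T^{2(a_{i}+1)}-1)$ specializes to $(-1)/(-1)=1$ at $T=0$. Applying $P$ and evaluating therefore yields
\[
P_{\st}(X)_{R}|_{T=0}=\sum_{\emptyset\ne J\subseteq I}P(F_{J}^{\circ})|_{T=0}=P(F)|_{T=0},
\]
where the last equality is additivity of $P$ on the stratification of $F$ by the $F_{J}^{\circ}$.

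The second step is to identify $P(F)|_{T=0}$ with $\chi(\Gamma(F))$. By inclusion-exclusion applied to the closed covering $F=\bigcup_{i}F_{i}$, which holds in $\hat{\cM}$ by repeated use of the cut-and-paste relation,
\[
[F]=\sum_{\emptyset\ne J\subseteq I}(-1)^{|J|-1}[F_{J}].
\]
Each $F_{J}$ is smooth and proper (a closed stratum of the proper normal crossing variety $F$), so its virtual Poincaré polynomial agrees with the classical one and $P(F_{J})|_{T=0}=b_{0}(F_{J})=n_{J}$, the number of connected components of $F_{J}$. Summing,
\[
P(F)|_{T=0}=\sum_{\emptyset\ne J\subseteq I}(-1)^{|J|-1}n_{J}=\chi(\Gamma(F)),
\]
the last equality being the very definition of the Euler characteristic of the dual complex in terms of its simplices. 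The hypothesis $\Gamma(F)\cong\Gamma(E)$ then gives $\chi(\Gamma(F))=\chi(\Gamma(E))$, completing the proof.

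The main technical obstacle is the rigorous treatment of the specialization at $T=0$: one must know that $P_{\st}(X)_{R}$, a priori only an element of $\bigcup_{r}\ZZ((T^{-1/r}))$, is a rational function regular at $T=0$ so that the evaluation makes sense. The computation above exhibits exactly such a presentation, with denominators $T^{2(a_{i}+1)}-1$ that are nonzero at $T=0$. A secondary bookkeeping point is that under the dual complex isomorphism $\Gamma(F)\cong\Gamma(E)$ the vertices correspond to the pieces $F_{i}=E_{i}\cap F$ and the higher simplices to the connected components of the $F_{J}$; this is naturally built into the definition of $\Gamma(F)$ when $F$ is presented as the reduced preimage $f^{-1}(R)$.
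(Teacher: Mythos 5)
Your proof is correct and follows essentially the same path as the paper's: start from the explicit formula of Proposition~\ref{prop:explicit-formula}, trade the open strata $F_J^{\circ}$ for the closed strata $F_J$ by inclusion–exclusion, and specialize at $T=0$ where each factor $(T^2-1)/(T^{2(a_i+1)}-1)$ becomes $1$ (valid since stringy log terminality gives $a_i>-1$). The only cosmetic difference is the order of operations: you substitute $T=0$ first and then identify the sum with $P(F)|_{T=0}$ by additivity, whereas the paper carries the rational factors through the inclusion–exclusion rearrangement and substitutes $T=0$ at the end; the resulting expression $\sum_{\emptyset\ne J}(-1)^{|J|-1}\sharp\pi_0(F_J)=\chi(\Gamma(F))$ is the same.
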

\begin{proof}
The following is a modification of Batyrev's computation in \cite{MR1672108}.
Write $E=\bigcup_{i\in I}E_{i},$ $F=\bigcup_{i\in I}E_{i}$ and $K_{Y/X}=\sum a_{i}E_{i}$.
Since 
\[
P(F_{J}{}^{\circ})=\sum_{J\subset J'}(-1)^{\sharp J-\sharp J'}P(F_{J'}),
\]
we have 
\begin{align*}
P_{\st}(X)_{R} & =\sum_{\emptyset\ne J\subset I}P(F_{J}^{\circ})\prod_{j\in J}\frac{T^{2}-1}{T^{2(a_{i}+1)}-1}\\
 & =\sum_{\emptyset\ne J\subset I}\left(\sum_{J\subset J'}(-1)^{\sharp J-\sharp J'}P(F_{J'})\right)\prod_{j\in J}\frac{T^{2}-1}{T^{2(a_{i}+1)}-1}\\
 & =\sum_{\emptyset\ne J\subset I}P(F_{J})\left(\prod_{j\in J}\left(\frac{T^{2}-1}{T^{2(a_{i}+1)}-1}-1\right)-(-1)^{\sharp J}\right).
\end{align*}
Substituting 0 for $T,$ we obtain 
\[
\sum_{\emptyset\ne J\subset I}(-1)^{\sharp J-1}\sharp\pi_{0}(F_{J}),
\]
where $\pi_{0}$ denotes the set of connected components. This clearly
equals to the Euler characteristic of the dual complex of $F$ and
to that of $E$.\end{proof}
\begin{rem}
Looking at finer realizations, we might able to capture finer invariants
of $\Gamma(E).$ \end{rem}
\begin{example}
Let $G$ be the cyclic group of prime order $p>0$ and $V$ a $G$-representation
over $k$ of characteristic $p>0.$ Suppose that $V$ is isomorphic
to the direct sum of $l$ indecomposable representations and that
$V$ has no reflection. Then the invariant locus $V^{G}\subset V$
and the singular locus $X_{\sing}$ of the quotient variety $X:=V/G$
are isomorphic to $\AA_{k}^{l}.$ The the additive group $\GG_{a}^{l}\cong\AA_{k}^{l}$
acts on $X$ and transitively on $X_{\sing}.$ Therefore, if a functorial
resolution exists over $k,$ then the resolution $Y\to X$ of $X$
is $\GG_{a}^{l}$-equivariant. Put $S=X_{\sing}$ and $R$ to be any
point of $S.$ Suppose that $D_{V}\ge p.$ Then the assumption of
Proposition \ref{prop:poincare-weight-0} holds, and from Equation
\ref{eq:stringy-p-cyclic}, 
\[
P_{\st}(X)_{R}|_{T=0}=1.
\]
Hence we cannot see whether $\Gamma(E)$ is contractible or not in
this case. 
\end{example}

\subsection{Crepant resolutions}
\begin{problem}
What quotient singularity admits a crepant resolution?
\end{problem}
Crepant resolutions are certainly interesting, for instance, since
they give a simple geometric meaning to stringy invariants and motivic
masses. In characteristic zero, Ito \cite{MR1307302,MR1366564}, Markushevich
\cite{MR899851,MR1464903,MR1273078} and Roan \cite{MR1380512,MR1284568}
proved that every three-dimensional Gorenstein quotient singularity
admits a crepant resolution. On the other hand, our knowledge is quite
limited in the wild situation. As far as the author knows, the following
are only known cases: 
\begin{enumerate}
\item Rational double points which are wild quotient singularities in Artin's
classification \cite{MR0450263}. In this case, the minimal resolution
is crepant.
\item $G=S_{n}$ and $V$ is the direct sum of two copies of the standard
representation. Then the Hilbert scheme of $n$ points on the affine
plane is a crepant resolution of the quotient variety, which is isomorphic
to the $n$-th symmetric product of the affine plane (see \cite[page 229]{MR2107324}
for a historical account of this fact).
\item $\characteristic k=\sharp G=3$ and $V=V_{3,}$ the 3-dimensional
indecomposable representation. See \cite{MR3230848}.
\end{enumerate}
We will be able to use stringy invariants also to study the above
problem. For instance, we can show that the second example with $n=2$
and the last one are only possible cases for linear actions of a cyclic
group of prime order in dimension $\le4.$ It follows from the fact
that $D_{V}=p$ is a necessary condition for the existence of crepant
resolution. It is because otherwise the topological Euler characteristic
realization is not an integer, which is not allowed if a crepant resolution
exists. The same reasoning shows that if a crepant resolution exists,
then $P_{\st}(X)$ is a polynomial in $T$ (not in $T^{1/r}$). 

\bibliographystyle{plain}
\bibliography{/Users/Takehiko/Dropbox/Math_Articles/mybib}

\begin{thebibliography}{10}

\bibitem{MR0450263}
M.~Artin.
\newblock Coverings of the rational double points in characteristic {$p$}.
\newblock In {\em Complex analysis and algebraic geometry}, pages 11--22.
  Iwanami Shoten, Tokyo, 1977.

\bibitem{MR1672108}
Victor~V. Batyrev.
\newblock Stringy {H}odge numbers of varieties with {G}orenstein canonical
  singularities.
\newblock In {\em Integrable systems and algebraic geometry (Kobe/Kyoto,
  1997)}, pages 1--32. World Sci. Publ., River Edge, NJ, 1998.

\bibitem{MR1677693}
Victor~V. Batyrev.
\newblock Non-{A}rchimedean integrals and stringy {E}uler numbers of
  log-terminal pairs.
\newblock {\em J. Eur. Math. Soc. (JEMS)}, 1(1):5--33, 1999.

\bibitem{MR1273078}
J.~Bertin and D.~Markushevich.
\newblock Singularit{\'e}s quotients non ab{\'e}liennes de dimension {$3$} et
  vari{\'e}t{\'e}s de {C}alabi-{Y}au.
\newblock {\em Math. Ann.}, 299(1):105--116, 1994.

\bibitem{MR2354798}
Manjul Bhargava.
\newblock Mass formulae for extensions of local fields, and conjectures on the
  density of number field discriminants.
\newblock {\em Int. Math. Res. Not. IMRN}, (17):Art. ID rnm052, 20, 2007.

\bibitem{MR1045822}
Siegfried Bosch, Werner L{{\"u}}tkebohmert, and Michel Raynaud.
\newblock {\em N{\'e}ron models}, volume~21 of {\em Ergebnisse der Mathematik
  und ihrer Grenzgebiete (3) [Results in Mathematics and Related Areas (3)]}.
\newblock Springer-Verlag, Berlin, 1990.

\bibitem{MR2107324}
Michel Brion and Shrawan Kumar.
\newblock {\em Frobenius splitting methods in geometry and representation
  theory}, volume 231 of {\em Progress in Mathematics}.
\newblock Birkh\"auser Boston Inc., Boston, MA, 2005.

\bibitem{MR1664700}
Jan Denef and Fran{\c{c}}ois Loeser.
\newblock Germs of arcs on singular algebraic varieties and motivic
  integration.
\newblock {\em Invent. Math.}, 135(1):201--232, 1999.

\bibitem{MR1905024}
Jan Denef and Fran{\c{c}}ois Loeser.
\newblock Motivic integration, quotient singularities and the {M}c{K}ay
  correspondence.
\newblock {\em Compositio Math.}, 131(3):267--290, 2002.

\bibitem{MR579791}
David Harbater.
\newblock Moduli of {$p$}-covers of curves.
\newblock {\em Comm. Algebra}, 8(12):1095--1122, 1980.

\bibitem{MR1307302}
Yukari Ito.
\newblock Crepant resolution of trihedral singularities and the orbifold
  {E}uler characteristic.
\newblock {\em Internat. J. Math.}, 6(1):33--43, 1995.

\bibitem{MR1366564}
Yukari Ito.
\newblock Gorenstein quotient singularities of monomial type in dimension
  three.
\newblock {\em J. Math. Sci. Univ. Tokyo}, 2(2):419--440, 1995.

\bibitem{MR2354797}
Kiran~S. Kedlaya.
\newblock Mass formulas for local {G}alois representations.
\newblock {\em Int. Math. Res. Not. IMRN}, (17):Art. ID rnm021, 26, 2007.
\newblock With an appendix by Daniel Gulotta.

\bibitem{MR3119100}
Moritz Kerz and Shuji Saito.
\newblock Cohomological {H}asse principle and resolution of quotient
  singularities.
\newblock {\em New York J. Math.}, 19:597--645, 2013.

\bibitem{MR1886763}
Eduard Looijenga.
\newblock Motivic measures.
\newblock {\em Ast{\'e}risque}, (276):267--297, 2002.
\newblock S{{\'e}}minaire Bourbaki, Vol. 1999/2000.

\bibitem{MR899851}
D.~G. Markushevich, M.~A. Olshanetsky, and A.~M. Perelomov.
\newblock Description of a class of superstring compactifications related to
  semisimple {L}ie algebras.
\newblock {\em Comm. Math. Phys.}, 111(2):247--274, 1987.

\bibitem{MR1464903}
Dimitri Markushevich.
\newblock Resolution of {${\mathbf{C}}^3/H_{168}$}.
\newblock {\em Math. Ann.}, 308(2):279--289, 1997.

\bibitem{0912.4887}
Johannes Nicaise and Julien Sebag.
\newblock A note on motivic integration in mixed characteristic.
\newblock arXiv:0912.4887, 2009.

\bibitem{MR1886756}
Miles Reid.
\newblock La correspondance de {M}c{K}ay.
\newblock {\em Ast{\'e}risque}, (276):53--72, 2002.
\newblock S{{\'e}}minaire Bourbaki, Vol. 1999/2000.

\bibitem{MR1284568}
Shi-Shyr Roan.
\newblock On {$c_1=0$} resolution of quotient singularity.
\newblock {\em Internat. J. Math.}, 5(4):523--536, 1994.

\bibitem{MR1380512}
Shi-Shyr Roan.
\newblock Minimal resolutions of {G}orenstein orbifolds in dimension three.
\newblock {\em Topology}, 35(2):489--508, 1996.

\bibitem{MR2075915}
Julien Sebag.
\newblock Int{\'e}gration motivique sur les sch{\'e}mas formels.
\newblock {\em Bull. Soc. Math. France}, 132(1):1--54, 2004.

\bibitem{MR500361}
Jean-Pierre Serre.
\newblock Une ``formule de masse'' pour les extensions totalement ramifi{\'e}es
  de degr{\'e} donn{\'e} d'un corps local.
\newblock {\em C. R. Acad. Sci. Paris S{\'e}r. A-B}, 286(22):A1031--A1036,
  1978.

\bibitem{Tonini:2013fk}
Fabio Tonini.
\newblock Stacks of ramified galois covers.
\newblock Ph.D.\ thesis, arXiv:1307.1116.

\bibitem{MR2411405}
Melanie~Matchett Wood.
\newblock Mass formulas for local {G}alois representations to wreath products
  and cross products.
\newblock {\em Algebra Number Theory}, 2(4):391--405, 2008.

\bibitem{Wood-Yasuda-II}
Melanie~Matchett Wood and Takehiko Yasuda.
\newblock Mass formulas for local {G}alois representations and quotient
  singularities {II}: dualities and resolution of singularities.
\newblock arXiv:1505.07577.

\bibitem{Wood-Yasuda-I}
Melanie~Matchett Wood and Takehiko Yasuda.
\newblock Mass formulas for local {G}alois representations and quotient
  singularities {I}: a comparison of counting functions.
\newblock {\em International Mathematics Research Notices}, doi:
  10.1093/imrn/rnv074, 2015.

\bibitem{wild-p-adic}
Takehiko Yasuda.
\newblock The wild {M}c{K}ay correspondence and $p$-adic measures.
\newblock arXiv:1412.5260.

\bibitem{Yasuda:2014fk2}
Takehiko Yasuda.
\newblock Wilder {M}c{K}ay correspondences.
\newblock arXiv:1404.3373, to appear in Nagoya Mathematical Journal.

\bibitem{MR2027195}
Takehiko Yasuda.
\newblock Twisted jets, motivic measures and orbifold cohomology.
\newblock {\em Compos. Math.}, 140(2):396--422, 2004.

\bibitem{MR2271984}
Takehiko Yasuda.
\newblock Motivic integration over {D}eligne-{M}umford stacks.
\newblock {\em Adv. Math.}, 207(2):707--761, 2006.

\bibitem{MR3230848}
Takehiko Yasuda.
\newblock The {$p$}-cyclic {M}c{K}ay correspondence via motivic integration.
\newblock {\em Compos. Math.}, 150(7):1125--1168, 2014.

\end{thebibliography}

\end{document}